\renewcommand\section{\@startsection {section}{1}{\z@}%
                                   {-3.5ex \@plus -1ex \@minus -.2ex}%
                                   {2.3ex \@plus.2ex}%
                                   {\centering\normalfont\bf}}
 \numberwithin{equation}{section}
\numberwithin{equation}{section}
\numberwithin{equation}{section}
\theoremstyle{plain}
\newtheorem{thm}{Theorem}[section]
\newtheorem{lemma}[thm]{Lemma}
\newtheorem{pro}[thm]{Proposition}
\newtheorem{cor}[thm]{Corollary}
\newtheorem{ex}[thm]{Example}
\newtheorem{de}[thm]{Definition}
\newtheorem{re}[thm]{Remark}
\newtheorem*{thm*}{Theorem}
\begin{document}
\title{Non-spectral problem for the planar self-affine measures}
\author{Jing-Cheng Liu, Xin-Han Dong$^*$ and Jian-Lin Li}
\address{Key Laboratory of High Performance Computing and Stochastic Information Processing
(Ministry of Education of China), College of Mathematics and
Computer Science, Hunan Normal University, Changsha, Hunan 410081,
China} \email{jcliu@hunnu.edu.cn} \email{xhdong@hunnu.edu.cn}

\address{School of Mathematics and Information Science, Shaanxi Normal University, Xi'an 710119, China}

\email{jllimath10@snnu.edu.cn}

\date{\today}
\keywords { Orthonormal set,  Spectral measure, Fourier transform, Zeros.}
\subjclass[2010]{Primary 28A80; Secondary 42C05, 46C05.}
\thanks{ The research is supported in part by the NNSF of China (Nos. 11571099, 11301175, 11571104 and 11571214),
the SRFDP of Higher Education (No. 20134306110003), the SRF of Hunan Provincial Education Department (No.14K057),
and the program for excellent talents in Hunan Normal University (No.ET14101).\\
$^*$Corresponding author.}

\begin{abstract}
In this paper, we consider the non-spectral problem for the planar
self-affine measures $\mu_{M,D}$ generated by an expanding integer
matrix $M\in M_2(\mathbb{Z})$ and  a finite digit set
$D\subset\mathbb{Z}^2$. Let $p\geq2$ be a positive integer,
$E_p^2:=\frac{1}{p}\{(i,j)^t:0\leq i,j\leq p-1\}$ and
$\mathcal{Z}_{D}^2:=\{x\in[0, 1)^2:\sum_{d\in D}{e^{2\pi i\langle
d,x\rangle}}=0\}$. We show that if
$\emptyset\neq\mathcal{Z}_{D}^2\subset E_p^2\setminus\{0\}$ and
$\gcd(\det(M),p)=1$, then there exist at most $p^2$ mutually
orthogonal exponential functions in $L^2(\mu_{M,D})$. In
particular, if $p$ is a prime, then the number $p^2$ is the best.
\end{abstract}

\maketitle

\section{\bf Introduction\label{sect.1}}

Let $M\in M_n(\mathbb{R})$ be an $n\times n$ expanding real matrix (that is,
all the eigenvalues of $M$ have moduli $>1$), and $D\subset\mathbb{R}^n$ be a finite subset with cardinality $\#(D)$.
Let $\{\phi_d(x)\}_{d\in D}$ be an iterated function system (IFS) defined by
\begin{equation*}\label {eq(1.1)}
\phi_d(x)=M^{-1}(x+d)   \ \  (x\in \mathbb{R}^n,  \ \ d\in D).
\end{equation*}
Then the IFS arises a natural \emph{self-affine measure} $\mu$ satisfying
\begin{equation}\label {eq(1.1)}
\mu=\mu_{M,D}=\frac{1}{\#(D)}\sum_{d\in D}\mu\circ\phi_d^{-1}.
\end{equation}
Such a measure $\mu_{M,D}$ is supported on the attractor $T(M,D)$ of the IFS $\{\phi_d\}_{d\in D}$ \cite{Hutchinson_1981}.

For a countable subset $\Lambda\subset\mathbb{R}^n$, let $\mathcal
{E}_\Lambda=\{e^{2\pi
i\langle\lambda,x\rangle}:\lambda\in\Lambda\}$. We call $\mu$  a
spectral measure, and $\Lambda$ a  spectrum of $\mu$ if $\mathcal
{E}_\Lambda$ is an orthogonal basis for $L^2(\mu)$. We also say
that $(\mu,\Lambda)$ is a $spectral$ $pair$. The existence of a
spectrum for $\mu$ is a basic problem in harmonic analysis, it was
initiated by Fuglede in his seminal paper \cite{Fuglede_1974}. The
first example of a singular, non-atomic, spectral measure was
given by Jorgensen and Pedersen in \cite{Jorgenson-Pederson_1985}.
This surprising discovery has received a lot of attention,  and
the research on the spectrality of self-affine measures has become
an interesting topic. Also, new spectral measures were found in
\cite{Hu-Lau_2008},
\cite{Dai_2012}-\cite{Dutkay-Haussermann-Lai_2015},
\cite{Li_2011}-\cite{Li_2012} and references cited therein. A
related problem is the non-spectral problem of  self-affine
measure. In \cite{Dutkay-Jorgensen}, Dutkay and Jorgensen showed
that if $ M=\left[ {\begin{array}{*{20}{c}}
{{p}}&{{0}}\\
{{0}}&{{{p}}}
\end{array}} \right]
$ with $p\in \mathbb{Z}\setminus 3\mathbb{Z}$, $p\geq 2$ and
\begin{align}\label{eq(1.2)}
\mathscr{D}=\left\{ {\left( {\begin{array}{*{20}{c}}
0\\
0
\end{array}} \right),\left( {\begin{array}{*{20}{c}}
1\\
0
\end{array}} \right),\left( {\begin{array}{*{20}{c}}
0\\
1
\end{array}} \right)} \right\},
\end{align}
then there are no 4 mutually orthogonal exponential functions in $L^2(\mu_{M,\mathscr{D}})$; they also prove that if
$
M=\left[ {\begin{array}{*{20}{c}}
{{2}}&{{1}}\\
{{0}}&{{{2}}}
\end{array}} \right],
$
then there exist at most $7$ mutually orthogonal exponential  functions in $L^2(\mu_{M,\mathscr{D}})$.
In \cite{Li_2008}, the third author of this paper proved that if  the expanding integer matrix
$
M=\left[ {\begin{array}{*{20}{c}}
{{a}}&{{b}}\\
{{0}}&{{{c}}}
\end{array}} \right]
$ with $ac\not\in 3\mathbb{Z}$, then there exist at most $3$ mutually orthogonal exponential  functions in $L^2(\mu_{M,\mathscr{D}})$,
and the number 3 is the best.  The third author also obtained the same conclusions for the expanding integer matrix
$
M=\left[ {\begin{array}{*{20}{c}}
{{a}}&{{b}}\\
{{d}}&{{{c}}}
\end{array}} \right]
$ with $\det(M)=ac-bd\not\in 3\mathbb{Z}$ in  \cite{Li_2009}.
In this paper, we will give a more general result which is suitable for more self-affine measures.
Before the statement of the main results, we first give some definitions and notations.

\par
For a positive integer $p\geq 2$ and a finite digit set $D\subset\mathbb{Z}^n$, let
\begin{equation}\label{eq(1.3)}
E_p^n:=\frac{1}{p}\{(l_1,l_2,\cdots,l_n)^t:0\leq l_1,\cdots,l_n\leq p-1\}, \ \ \ \ \ \ \ \mathring{E}_p^n:=E_p^n\setminus\{0\}
\end{equation}
and
\begin{equation}\label{eq(1.4)}
m_{D}(x)=\frac{1}{\#(D)}\sum\limits_{d\in D}{e^{2\pi i\langle d,x\rangle}}, \quad x\in\mathbb{R}^n,
\end{equation}
where $m_{D}(x)$ is called the\emph{ mask polynomial} of $D$ as
usual.

\par
Define $\mathcal {Z}(m_{D}):=\{ x\in \Bbb R^n:\;m_{D}(x)=0 \}$ and
\begin{equation}\label{eq(1.5)}
\mathcal{Z}_{D}^n:=\mathcal{Z}(m_{D})\cap [0, 1)^n.
\end{equation}
It is easy to see that $m_D$ is a $\mathbb{Z}^n$-periodic function for $D\subset \mathbb{Z}^n$. In this case,
\begin{equation}\label{eq(1.6)}
\mathcal{ Z}(m_D)=\mathcal{Z}_{D}^n+\mathbb{Z}^n.
\end{equation}

\begin{thm} \label{th(1.1)}
Let $p\geq 2$ be a positive integer,  $D\subset\mathbb{Z}^2$ be a
finite digit set, $M\in M_2(\Bbb Z)$ be an expanding integer
matrix, and let $\mu_{M,D}$, $\mathring{E}_p^2$,
$\mathcal{Z}_{D}^2$ be defined by \eqref{eq(1.1)}, \eqref{eq(1.3)}
and \eqref{eq(1.5)}, respectively. If
$\emptyset\neq\mathcal{Z}_{D}^2\subset\mathring{E}_p^2$ and
$\gcd(\det(M),p)=1$, then there exist at most $p^2$ mutually
orthogonal exponential functions  in $L^2(\mu_{M,D})$. In
particular, if $p$ is a prime, then the number $p^2$ is the best.
\end{thm}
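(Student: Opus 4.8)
The plan is to convert the orthogonality of exponentials in $L^2(\mu_{M,D})$ into a containment statement for the zero set of the Fourier transform $\widehat{\mu}_{M,D}$, and then to pin that zero set down inside $\frac1p\mathbb{Z}^2$. From \eqref{eq(1.1)} one has the usual infinite product $\widehat{\mu}_{M,D}(\xi)=\prod_{k\ge1}m_D\big((M^{t})^{-k}\xi\big)$, where $M^{t}$ is the transpose of $M$; since $D\subset\mathbb{Z}^2$ is finite, $m_D(0)=1$ and $M$ is expanding, the tail $\prod_{k\ge N}m_D\big((M^{t})^{-k}\xi\big)$ is, for $N$ large, bounded away from $0$ locally uniformly, so the product vanishes at $\xi$ exactly when some factor does. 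Hence
\[ \mathcal{Z}(\widehat{\mu}_{M,D})=\bigcup_{k\ge1}(M^{t})^{k}\,\mathcal{Z}(m_D), \]
and $e^{2\pi i\langle\lambda,x\rangle}$ and $e^{2\pi i\langle\lambda',x\rangle}$ are orthogonal in $L^2(\mu_{M,D})$ if and only if $\lambda-\lambda'\in\mathcal{Z}(\widehat{\mu}_{M,D})$ (this set being symmetric, the order of $\lambda,\lambda'$ is immaterial). I would begin by recording these standard facts, together with the $\mathbb{Z}^2$-periodicity \eqref{eq(1.6)}.

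For the upper bound, the hypothesis $\mathcal{Z}_D^2\subset\mathring{E}_p^2$ gives $\mathcal{Z}(m_D)=\mathcal{Z}_D^2+\mathbb{Z}^2\subset\mathring{E}_p^2+\mathbb{Z}^2=\frac1p\mathbb{Z}^2\setminus\mathbb{Z}^2$. Next I would invoke $\gcd(\det M,p)=1$: the reduction of $M^{t}$ modulo $p$ then lies in $GL_2(\mathbb{Z}/p\mathbb{Z})$, so for every $k\ge1$ the map $(M^{t})^{k}$ sends $\frac1p\mathbb{Z}^2$ into itself and cannot carry a point of $\frac1p\mathbb{Z}^2\setminus\mathbb{Z}^2$ into $\mathbb{Z}^2$ (indeed, if $(M^{t})^{k}\frac1p v\in\mathbb{Z}^2$ with $v\in\mathbb{Z}^2$, then $(M^{t})^{k}v\equiv0\pmod{p}$, forcing $v\equiv0\pmod{p}$, i.e.\ $\frac1p v\in\mathbb{Z}^2$). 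Therefore $\mathcal{Z}(\widehat{\mu}_{M,D})\subset\frac1p\mathbb{Z}^2\setminus\mathbb{Z}^2$. Given mutually orthogonal exponentials $e^{2\pi i\langle\lambda_1,x\rangle},\dots,e^{2\pi i\langle\lambda_N,x\rangle}$, replacing each $\lambda_j$ by $\lambda_j-\lambda_1$ (which changes no difference) I may assume $\lambda_1=0$; then each $\lambda_j\in\mathcal{Z}(\widehat{\mu}_{M,D})\cup\{0\}\subset\frac1p\mathbb{Z}^2$, while $\lambda_j-\lambda_l\notin\mathbb{Z}^2$ for $j\ne l$, so $\lambda_1,\dots,\lambda_N$ represent pairwise distinct cosets of $\frac1p\mathbb{Z}^2/\mathbb{Z}^2$. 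Since this group has $p^2$ elements, $N\le p^2$.

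For sharpness when $p$ is prime, I would exhibit an explicit $(M,D)$ attaining $p^2$. For odd $p$, take the simplex-type digit set $D_p=\{(i,0)^{t}:0\le i\le\frac{p-1}{2}-1\}\cup\{(i,1)^{t}:0\le i\le\frac{p-1}{2}\}$ (a close relative of the set $\mathscr{D}$ of the introduction); a direct computation with $m_{D_p}$, using the rigidity of vanishing sums of $p$-th roots of unity, shows $\mathcal{Z}_{D_p}^2=\big\{\tfrac1p\big(k,\tfrac{p-1}{2}k\bmod p\big)^{t}:1\le k\le p-1\big\}$, a nonempty subset of $\mathring{E}_p^2$. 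For $M$, take an expanding integer matrix whose reduction modulo $p$ is a Singer cycle of $GL_2(\mathbb{Z}/p\mathbb{Z})$, i.e.\ the matrix of multiplication by a generator of $\mathbb{F}_{p^2}^{\times}$ on $\mathbb{F}_{p^2}\cong(\mathbb{Z}/p\mathbb{Z})^2$; such an $M$ can be built from the companion matrix of a primitive polynomial over $\mathbb{Z}/p\mathbb{Z}$ by adding a large multiple of $pI$, and it automatically satisfies $\gcd(\det M,p)=1$. Iterating \eqref{eq(1.1)} gives $\widehat{\mu}_{M,D}\big((M^{t})^{L}\xi\big)=\widehat{\mu}_{M,D}(\xi)\prod_{j=0}^{L-1}m_D\big((M^{t})^{j}\xi\big)$ for all $L\ge1$. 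Because a Singer cycle acts transitively on $(\mathbb{Z}/p\mathbb{Z})^2\setminus\{0\}$, for each $w\in\mathbb{Z}^2$ with $w\not\equiv0\pmod{p}$ there is $j$ with $0\le j\le p^2-2$ and $(M^{t})^{j}\frac1p w\equiv z\pmod{\mathbb{Z}^2}$ for some $z\in\mathcal{Z}_{D_p}^2$, i.e.\ $m_D\big((M^{t})^{j}\frac1p w\big)=0$. Taking $L\ge p^2-1$, the last identity shows that all pairwise differences of the $p^2$ points of $\Lambda:=(M^{t})^{L}E_p^2$ lie in $\mathcal{Z}(\widehat{\mu}_{M,D})$, so $\mathcal{E}_\Lambda$ is a family of $p^2$ mutually orthogonal exponentials in $L^2(\mu_{M,D})$; with the upper bound, $p^2$ is best possible.

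The two infinite-product identities and the coset count are routine. The step I expect to be the genuine obstacle is the sharpness half: one must produce a digit set with $\emptyset\ne\mathcal{Z}_D^2\subset\mathring{E}_p^2$ — which forces $\mathcal{Z}_D^2$ to be a finite set, so that $m_D$ degenerates very specially — and compute its zero set by hand, and one must verify that the chosen $M$ does reduce modulo $p$ to a full-length Singer cycle. It is exactly the transitivity of that cycle on $(\mathbb{Z}/p\mathbb{Z})^2\setminus\{0\}$ that lets a single dilated copy $(M^{t})^{L}E_p^2$ of the lattice model $E_p^2$ serve as a maximal orthogonal set.
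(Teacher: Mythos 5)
Your upper bound is correct and is in substance the paper's argument: the paper packages the coset count as a citation to Dutkay--Jorgensen (Lemma \ref{th(2.1)}) applied with $\mathrm{Z}'=\mathring{E}_p^2$, using Proposition \ref{th(2.2)} (which is exactly your observation that $\gcd(\det M,p)=1$ makes $M^*$ invertible mod $p$, so $(M^*)^k$ preserves $\tfrac1p\mathbb{Z}^2\setminus\mathbb{Z}^2$ modulo $\mathbb{Z}^2$); your direct derivation of $\mathcal{Z}(\hat{\mu}_{M,D})\subset\tfrac1p\mathbb{Z}^2\setminus\mathbb{Z}^2$ followed by counting cosets of $\tfrac1p\mathbb{Z}^2/\mathbb{Z}^2$ is the same computation with the lemma unpacked. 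For sharpness you also hit the paper's key idea: take $M$ congruent mod $p$ to the companion matrix of a primitive quadratic over $\mathbb{F}_p$ (a Singer cycle, order $p^2-1$, transitive on $(\mathbb{Z}/p\mathbb{Z})^2\setminus\{0\}$), made expanding by a perturbation that is trivial mod $p$; this is Proposition \ref{th(2.6)} plus the $N=pm+1$ rescaling in the paper, and your verification that the differences of $(M^t)^L E_p^2$ land in $\mathcal{Z}(\hat{\mu}_{M,D})$ matches the paper's $\Lambda=L^{\varphi(p)(p^2-1)}E_p^2$ construction in Theorem \ref{th(1.3)}(ii).

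The one substantive deficiency is in how you deploy the sharpness construction. The theorem fixes a digit set $D$ with $\emptyset\neq\mathcal{Z}_D^2\subset\mathring{E}_p^2$, and the paper's proof shows that for \emph{that} $D$ some admissible $M$ attains $p^2$; you instead manufacture a new digit set $D_p$, which both proves less and, as written, fails for $p=2$ (your $D_p$ is only defined for odd $p$, and no example is supplied for $p=2$). This is unnecessary: your own transitivity observation already does the whole job for the given $D$. Since $\mathcal{Z}_D^2\neq\emptyset$, pick any $z\in\mathcal{Z}_D^2\subset\mathring{E}_p^2$; transitivity of the Singer cycle on nonzero residues gives, for every $w\not\equiv0\pmod p$, some $0\le j\le p^2-2$ with $(M^t)^j\tfrac1p w\equiv z\pmod{\mathbb{Z}^2}$, and the rest of your argument goes through verbatim. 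The explicit computation of $\mathcal{Z}_{D_p}^2$ (which does check out, including the absence of zeros off $\tfrac1p\mathbb{Z}^2$) can simply be deleted. A second, minor point to tidy: you require the reduction of $M$ mod $p$ to be a Singer cycle but then iterate $M^t$; you should either take the transpose of the companion matrix from the start or note that an element of order $p^2-1$ in $GL_2(\mathbb{F}_p)$ necessarily has irreducible (indeed primitive) characteristic polynomial and hence acts transitively, so $M^t$ is again a Singer cycle generator. The paper sidesteps this by constructing $\widetilde M$ and then working with the measure $\mu_{\widetilde M^*,D}$.
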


 In fact, we can prove a more general result. In order to  state this conclusion, we need the following
definition.

\medskip

\begin{de}\label{de(1.2)}
Let  $\mu$ be a Borel probability measure with compact support on
$\mathbb{R}^n$.  Let $\Lambda$ be a finite or countable subset of
$\mathbb{R}^n$, and let ${\mathcal E}_{\Lambda}=\{e^{2\pi
i\langle\lambda,x\rangle}:\lambda\in\Lambda\}$. We denote
${\mathcal E}_{\Lambda}$ by  ${\mathcal E}^*_{\Lambda}$ if
${\mathcal E}_{\Lambda}$ is a maximal orthogonal set of
exponential functions in $L^2(\mu)$. Let
\begin{align}\label{eq(1.7)}
n^*(\mu):=\sup\{ \#\Lambda :  \mathcal {E}_{\Lambda}^* \hbox { is a maximal orthogonal set} \},
\end{align}
 and call $n^*(\mu)$  the maximal cardinality of the  orthogonal exponential functions  in $L^2(\mu)$.
\end{de}

\medskip

Obviously, ${\mathcal E}^*_\Lambda$ is not unique,  even  among
those $\Lambda$ with equal cardinality. For a positive integer
$p\geq 2$, let $\mathfrak{M}_p$ denote the class of all expanding
matrices $M\in M_2(\Bbb Z)$ with  $\gcd(\det(M),p)=1$. If
$D\subset\mathbb{Z}^2$ is a finite digit set with
$\emptyset\neq\mathcal{Z}_{D}^2\subset\mathring{E}_p^2$, then
$n^*(\mu_{M,D})\leq p^2$ for $M\in \mathfrak{M}_p$\; by Theorem
\ref{th(1.1)}. Furthermore, we can divide (see Theorem
\ref{th(1.3)} below) $\mathfrak{M}_p$  into two disjoint
subclasses $\mathfrak{M}_p^{(1)}$ and $\mathfrak{M}_p^{(2)}$ such
that $n^*(\mu_{M,D})\leq\widetilde{P}$ for $M\in
\mathfrak{M}_p^{(1)}$, and $n^*(\mu_{M,D})=p^2$ for $M\in
\mathfrak{M}_p^{(2)}$, where
\begin{align}\label{eq(1.8)}
\widetilde{P}=
\left\{
\begin{array}{ll}
\frac{p^2}{2},\quad &{\rm if}\;  p \; {\rm is \; even}, \\
\frac{p^2-1}{2},\quad &{\rm if}\;  p>3\;  {\rm is \; odd},\\
3,\quad &{\rm if}\;  p=3.
\end{array}
\right.
\end{align}
For $M\in \mathfrak{M}_p$,  we denote the transposed conjugate of
$M$ by $M^*$. Let $\lambda\in \mathring{E}_p^2$. We observe (see
Proposition \ref{th(2.2)}) that ${M^*}^j\lambda\in
\mathring{E}_p^2$ (mod $\mathbb{Z}^2$) for each $j$. From this
conclusion, we can easily prove
$\bigcup_{j=1}^\infty\{{M^*}^j\lambda\}=\bigcup_{j=1}^{p^2-1}\{{M^*}^j\lambda\}$
(mod $\mathbb{Z}^2$), since there exist only $p^2-1$ elements in
$\mathring{E}_p^2$. Let
\begin{align}\label{eq(1.9)}
\left\{
\begin{array}{ll}
\mathfrak{M}_p^{(1)}=\{M:\;\bigcup_{j=1}^{p^2-1}{M^*}^j\mathcal{Z}_{D}^2\subsetneqq \mathring{E}_p^2 ({\rm{mod}}  \ \mathbb{Z}^2)\},\\
\mathfrak{M}_p^{(2)}=\{M:\;\bigcup_{j=1}^{p^2-1}{M^*}^j\mathcal{Z}_{D}^2=\mathring{E}_p^2({\rm{mod}} \  \mathbb{Z}^2)\}.
\end{array}
\right.
\end{align}
It is possible that $\mathfrak{M}_p^{(1)}=\emptyset$ (see Example \ref{th(4.2)}), but if $p$ is a prime,
then $\mathfrak{M}_p^{(2)}\neq \emptyset$ (see Proposition \ref{th(2.6)}).

\begin{thm} \label{th(1.3)}
 Under the  conditions of Theorem \ref{th(1.1)}, let $n^*(\mu_{M,D})$ be given by \eqref{eq(1.7)}. Then
\begin{align*}
n^*(\mu_{M,D})
\left\{
\begin{array}{ll}
\leq \widetilde{P},\quad &{\rm if}\;  M\in \mathfrak{M}_p^{(1)}, \\
=p^2,\quad &{\rm if}\;  M\in \mathfrak{M}_p^{(2)} ,
\end{array}
\right.
\end{align*}
where   $\widetilde{P}$ and $\mathfrak{M}_p^{(1)}$, $\mathfrak{M}_p^{(2)}$  are defined by \eqref{eq(1.8)} and \eqref{eq(1.9)},
respectively.
\end{thm}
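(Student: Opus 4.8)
The plan is to build directly on the machinery already set up for Theorem \ref{th(1.1)}. The starting point is the standard observation that if $\mathcal{E}_\Lambda$ is an orthogonal set in $L^2(\mu_{M,D})$ with $0\in\Lambda$, then $\lambda-\lambda'\in\mathcal{Z}(\widehat{\mu_{M,D}})$ for all distinct $\lambda,\lambda'\in\Lambda$, and the zero set of $\widehat{\mu_{M,D}}$ is governed by the zeros of the mask polynomial $m_D$ pulled back through powers of $M^*$: concretely $\mathcal{Z}(\widehat{\mu_{M,D}})=\bigcup_{j\ge1}{M^*}^{-j}\mathcal{Z}(m_D)$. Using $\mathcal{Z}(m_D)=\mathcal{Z}_D^2+\mathbb{Z}^2$ and the hypothesis $\mathcal{Z}_D^2\subset\mathring E_p^2$ together with $\gcd(\det M,p)=1$, Proposition \ref{th(2.2)} tells us that applying $M^*$ (or its inverse mod $\mathbb{Z}^2$) permutes $\mathring E_p^2$; hence the set of admissible differences, reduced mod $\mathbb{Z}^2$, lands inside $S:=\bigcup_{j=1}^{p^2-1}{M^*}^j\mathcal{Z}_D^2\pmod{\mathbb{Z}^2}$. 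So the difference set $(\Lambda-\Lambda)\setminus\{0\}$ maps, under reduction mod $\mathbb{Z}^2$, into $S\subset\mathring E_p^2$. This reduces the whole problem to a combinatorial one about finite subsets of the group $(\mathbb{Z}/p)^2$: counting the largest $\Lambda$ whose nonzero differences all lie in the fixed subset $S$ of $(\mathbb{Z}/p)^2\setminus\{0\}$.

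For the case $M\in\mathfrak{M}_p^{(2)}$, by definition $S=\mathring E_p^2\pmod{\mathbb{Z}^2}$, i.e. all nonzero elements of $(\mathbb{Z}/p)^2$ are admissible, so a full set of coset representatives $\Lambda=E_p^2$ (lifted to $\mathbb{Z}^2$, or rather a $\mathbb{Z}^2$-translate-free choice) gives $p^2$ mutually orthogonal exponentials; combined with the upper bound $p^2$ from Theorem \ref{th(1.1)} this yields $n^*(\mu_{M,D})=p^2$. The real work is the bound $n^*(\mu_{M,D})\le\widetilde P$ when $M\in\mathfrak{M}_p^{(1)}$, i.e. when $S\subsetneq\mathring E_p^2$. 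Here I would argue as follows. First, a symmetry fact: the zero set $\mathcal{Z}(m_D)$ — hence, after the $M^*$-action, the set $S$ — is invariant under negation $x\mapsto -x$ mod $\mathbb{Z}^2$, because $\overline{m_D(x)}=m_D(-x)$ and $m_D(x)=0\iff\overline{m_D(x)}=0$. I will need to pin down exactly which elements of $(\mathbb{Z}/p)^2$ are fixed by negation: these are the $2$-torsion points, of which there are $1$, $4$, or $1$ according as $p$ is odd (only $0$), even (the four points with coordinates in $\{0,p/2\}$), and one must track the $p=3$ exception separately. The counting principle is then: if $\Lambda$ has all nonzero differences in $S$ and $0\in\Lambda$ (WLOG by translation), then for each $\lambda\in\Lambda\setminus\{0\}$ both $\lambda$ and $-\lambda$ lie in $S\cup(\text{2-torsion})$, and moreover $\lambda-(-\lambda)=2\lambda$ must also be an admissible difference or zero. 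Pairing each $\lambda$ with $-\lambda$, and using that $S$ is a proper subset missing at least one negation-invariant pair of points, forces $\#\Lambda\le\widetilde P$. The three cases of $\widetilde P$ correspond precisely to: $p$ even, where the $2$-torsion contributes $4$ points and the bound is $p^2/2$; $p>3$ odd, where only $0$ is $2$-torsion and the bound is $(p^2-1)/2+1$... and here I must be careful, since $\widetilde P=(p^2-1)/2$ exactly, so the pairing argument must also exploit that $2\lambda$ ranges over differences; and $p=3$, where $\mathring E_3^2$ has $8$ points but the extra arithmetic constraint (any two of the eight nonzero vectors mod $3$ whose difference is again among them is very restrictive) cuts $\Lambda$ down to $3$.

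The main obstacle I anticipate is precisely this last combinatorial counting in $\mathfrak{M}_p^{(1)}$: showing that a proper, negation-symmetric subset $S$ of $(\mathbb{Z}/p)^2\setminus\{0\}$ that additionally is "closed enough" under the difference structure of any candidate $\Lambda$ cannot support a $\Lambda$ larger than $\widetilde P$. One needs to use not just $|S|<p^2-1$ but the structural constraint that $\Lambda-\Lambda\subseteq S\cup\{0\}$ — a genuine Sidon-type/caps-type condition — and I expect the clean way is: fix $\lambda_0\in\mathring E_p^2\setminus S$ (exists since $S$ is proper) together with $-\lambda_0$; then for each $\lambda\in\Lambda$, $\lambda+\lambda_0\notin\Lambda$ (else their difference $\lambda_0\notin S$), so $\Lambda$ and $\Lambda+\lambda_0$ are disjoint subsets of a set of size controlled by $S\cup\{0,\lambda_0,-\lambda_0,\dots\}$, and careful bookkeeping of how $\lambda_0$ interacts with the $2$-torsion yields exactly the three values of $\widetilde P$. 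The $p=3$ case will likely require a short separate exhaustive argument rather than the general pairing, and I would present it last. Throughout I will lean on Proposition \ref{th(2.2)} (the $M^*$-action stabilizes $\mathring E_p^2$ mod $\mathbb{Z}^2$) and on the upper bound from Theorem \ref{th(1.1)} so as not to redo that estimate.
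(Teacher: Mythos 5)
Your treatment of the case $M\in\mathfrak{M}_p^{(1)}$ is essentially the paper's argument: reduce to the combinatorial statement that any $A\subset E_p^2$ with $\#A=\widetilde P+1$ must satisfy $A-A=E_p^2\pmod{\mathbb{Z}^2}$ (the paper's Proposition \ref{th(2.7)}), proved by the same translate--disjointness device you sketch (for $\tau\notin A-A$ the sets $A$, $A+\tau$, $A-\tau$ are pairwise disjoint mod $\mathbb{Z}^2$), with $p=3$ handled separately by forcing a coincidence $\lambda_{i}-\tau\equiv\lambda_{j}+\tau$ and using $2\equiv-1\pmod 3$. One correction there: the zero set is $\bigcup_{j\ge1}{M^*}^{j}\mathcal{Z}(m_D)$ (positive powers, since $\hat\mu_{M,D}(\xi)=\prod_j m_D({M^*}^{-j}\xi)$), not $\bigcup_{j\ge1}{M^*}^{-j}\mathcal{Z}(m_D)$; with negative powers the reduction mod $\mathbb{Z}^2$ you invoke would not go through, because ${M^*}^{-j}$ does not preserve $\mathbb{Z}^2$.

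The genuine gap is in the lower bound for $M\in\mathfrak{M}_p^{(2)}$. You take $\Lambda=E_p^2$ (or ``a $\mathbb{Z}^2$-translate-free choice'') and assert that, since every nonzero difference is congruent mod $\mathbb{Z}^2$ to some ${M^*}^{j}\lambda_0$ with $\lambda_0\in\mathcal{Z}_D^2$, all differences lie in $\mathcal{Z}(\hat\mu_{M,D})$. But $\mathcal{Z}(\hat\mu_{M,D})=\bigcup_{j}{M^*}^{j}(\mathcal{Z}_D^2+\mathbb{Z}^2)$, and ${M^*}^{j}(\lambda_0+\mathbb{Z}^2)=({M^*}^{j}\lambda_0)+{M^*}^{j}\mathbb{Z}^2$ involves only the sublattice ${M^*}^{j}\mathbb{Z}^2$ of index $|\det M|^{j}$, not all of $\mathbb{Z}^2$. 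Hence $\xi\equiv{M^*}^{j}\lambda_0\pmod{\mathbb{Z}^2}$ does not imply $\xi\in{M^*}^{j}(\lambda_0+\mathbb{Z}^2)$, and no single choice of coset representatives repairs this uniformly in $j$. The paper closes exactly this hole by setting $L=\det M$ and taking $\Lambda=L^{\varphi(p)(p^2-1)}E_p^2$: Euler's theorem gives $L^{\varphi(p)}\equiv1\pmod p$, so the scaling does not change residues modulo $\mathbb{Z}^2$ of points of $\tfrac1p\mathbb{Z}^2$, while $L^{\varphi(p)j}{M^*}^{-j}\in M_2(\mathbb{Z})$ guarantees $L^{\varphi(p)j}\bigl({M^*}^{j}\lambda_0+\mathbb{Z}^2\bigr)\subset{M^*}^{j}(\lambda_0+\mathbb{Z}^2)$. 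Without some such device your construction does not yield an orthogonal set. A further, smaller point: quoting ``the upper bound $p^2$ from Theorem \ref{th(1.1)}'' is circular, since the paper derives Theorem \ref{th(1.1)} from Theorem \ref{th(1.3)}; the bound $n^*(\mu_{M,D})\le p^2$ should instead be taken directly from Lemma \ref{th(2.1)} applied with $\mathrm{Z}'=\mathring{E}_p^2$, which is stabilized by $M^*$ by Proposition \ref{th(2.2)}.
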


By Theorem \ref{th(1.3)}, to prove  Theorem \ref{th(1.1)}, we only
need to prove that if $p$ is a prime, then
$\mathfrak{M}_p^{(2)}\neq \emptyset$.  In particular, if $p=2$ or
$3$, we can  get a more stronger  result.

\begin{thm} \label{th(1.4)}
Under the conditions of Theorem \ref{th(1.3)}, if $p=2$ or $3$, then
\begin{align*}
n^*(\mu_{M,D})=
\left\{
\begin{array}{ll}
p,\quad &{\rm if}\; M\in \mathfrak{M}_p^{(1)}, \\
p^2,\quad &{\rm if}\;  M\in \mathfrak{M}_p^{(2)} .
\end{array}
\right.
\end{align*}
\end{thm}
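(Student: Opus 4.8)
The plan is to derive Theorem~\ref{th(1.4)} from Theorem~\ref{th(1.3)} together with one explicit construction. First, the case $M\in\mathfrak{M}_p^{(2)}$ requires nothing new: Theorem~\ref{th(1.3)} already gives $n^*(\mu_{M,D})=p^2$. So the entire content lies in the subclass $\mathfrak{M}_p^{(1)}$, and here the key elementary remark is that for $p=2$ (even) and for $p=3$ the number $\widetilde{P}$ of \eqref{eq(1.8)} equals $p$: indeed $\widetilde{P}=p^2/2=2$ when $p=2$ and $\widetilde{P}=3$ when $p=3$. Hence Theorem~\ref{th(1.3)} yields the upper bound $n^*(\mu_{M,D})\le p$ for $M\in\mathfrak{M}_p^{(1)}$, and it remains only to exhibit $p$ mutually orthogonal exponential functions in $L^2(\mu_{M,D})$.

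For the lower bound I would use the standard product expansion $\widehat{\mu_{M,D}}(\xi)=\prod_{j=1}^{\infty}m_D\big((M^*)^{-j}\xi\big)$, together with the fact that $e^{2\pi i\langle\lambda_1,x\rangle}$ and $e^{2\pi i\langle\lambda_2,x\rangle}$ are orthogonal in $L^2(\mu_{M,D})$ if and only if $\widehat{\mu_{M,D}}(\lambda_1-\lambda_2)=0$, i.e.\ if and only if $(M^*)^{-j}(\lambda_1-\lambda_2)\in\mathcal{Z}(m_D)$ for some $j\ge 1$. Because $D\subset\mathbb{Z}^2$ is real, $m_D(-x)=\overline{m_D(x)}$, so $\mathcal{Z}(m_D)$ is symmetric; and because $m_D$ is $\mathbb{Z}^2$-periodic with $p\lambda\in\mathbb{Z}^2$ for $\lambda\in\mathring{E}_p^2$, when $p=3$ one has $2\lambda\equiv-\lambda\pmod{\mathbb{Z}^2}$, whence $2\lambda\in\mathcal{Z}(m_D)$ as soon as $\lambda\in\mathcal{Z}(m_D)$. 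Thus, for $p=2$ or $3$, if $\lambda\in\mathcal{Z}_D^2\subset\mathcal{Z}(m_D)$ then $k\lambda\in\mathcal{Z}(m_D)$ for every integer $k$ with $k\not\equiv 0\pmod p$.

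Now fix $\lambda\in\mathcal{Z}_D^2$ (nonempty by hypothesis) and set $\Lambda:=\{\,kM^*\lambda:0\le k\le p-1\,\}$, so $\Lambda=\{0,M^*\lambda\}$ if $p=2$ and $\Lambda=\{0,M^*\lambda,2M^*\lambda\}$ if $p=3$. Since $M^*$ is invertible and $\lambda\neq 0$, these $p$ vectors are pairwise distinct, so $\#\Lambda=p$. For $k\neq k'$ in $\{0,\dots,p-1\}$ the difference is $kM^*\lambda-k'M^*\lambda=M^*\big((k-k')\lambda\big)$ with $k-k'\not\equiv 0\pmod p$; by the previous paragraph $(k-k')\lambda\in\mathcal{Z}(m_D)$, so the $j=1$ factor $m_D\big((M^*)^{-1}M^*((k-k')\lambda)\big)=m_D\big((k-k')\lambda\big)$ in the product expansion of $\widehat{\mu_{M,D}}(kM^*\lambda-k'M^*\lambda)$ vanishes. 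Hence $\mathcal{E}_\Lambda$ is a set of $p$ mutually orthogonal exponential functions, giving $n^*(\mu_{M,D})\ge p$. Combining with the upper bound above gives $n^*(\mu_{M,D})=p$ for $M\in\mathfrak{M}_p^{(1)}$, while Theorem~\ref{th(1.3)} gives $n^*(\mu_{M,D})=p^2$ for $M\in\mathfrak{M}_p^{(2)}$, which proves the theorem.

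The step I expect to carry the real weight — and which explains why the statement is special to $p\in\{2,3\}$ — is the closure property ``$\lambda\in\mathcal{Z}_D^2$ and $k\not\equiv 0\pmod p$ imply $k\lambda\in\mathcal{Z}(m_D)$'' used in the lower bound, together with the fact that the shift by $M^*$ is essential (replacing $kM^*\lambda$ by $k\lambda$ would force one to control $(M^*)^{-j}((k-k')\lambda)$ for $j\ge1$, which is not available from $\mathcal{Z}_D^2\subset\mathring{E}_p^2$ alone). The closure property is automatic for $p=2$ and follows for $p=3$ from the conjugate symmetry of $\mathcal{Z}(m_D)$ because $2\equiv-1\pmod 3$; for larger primes it fails in general, consistently with $\widetilde{P}=(p^2-1)/2>p$ there. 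Everything else is bookkeeping with the Fourier transform of $\mu_{M,D}$ and a direct appeal to Theorem~\ref{th(1.3)}.
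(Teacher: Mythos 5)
Your proposal is correct and follows essentially the same route as the paper: the upper bound comes from Theorem \ref{th(1.3)} via the observation that $\widetilde{P}=p$ when $p=2$ or $3$, and the lower bound is the same explicit construction $\Lambda=\{0,M^{*}\lambda,\ldots\}$ built from a single $\lambda\in\mathcal{Z}_{D}^{2}$, using the conjugate symmetry of $\mathcal{Z}(m_{D})$, its $\mathbb{Z}^{2}$-periodicity, and $p\lambda\in\mathbb{Z}^{2}$. The only cosmetic difference is that for $p=3$ you take $2M^{*}\lambda$ where the paper takes $-M^{*}\lambda$; these coincide modulo $\mathbb{Z}^{2}$, so the orthogonality verification is identical.
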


\medskip

The organization of the paper is as follows. We give several
preparatory definitions and  conclusions in Section 2, and prove
Theorems \ref{th(1.1)}, \ref{th(1.3)} and \ref{th(1.4)} in Section
3. As an application, in Section 4 we give a complete discussion
for the Sierpinski measure in \cite{Li_2009} and construct an
example to illustrate some special case.

\section{\bf Preliminaries\label{sect.2}}

In this section, we give some  preliminary definitions and
propositions. We will start with an introduction to the Fourier
transform. Let $M\in M_n(\mathbb{R})$ be an $n\times n$ expanding
real matrix, $D\subset\mathbb{R}^n$ be a finite subset with
cardinality $|D|$. Let $\mu_{M,D}$ be defined by \eqref{eq(1.1)}.
In the study of spectrality of $\mu_{M,D}$, the Fourier transform
$\hat{\mu}_{M,D}(\xi)=\int e^{2\pi i\langle x,\xi\rangle
}d\mu_{M,D}(x), (\xi\in\mathbb{R}^n)$ of $\mu_{M,D}$ plays an
important role. It follows from \cite{Dutkay-Jorgensen} that
\begin{equation}\label {eq(2.1)}
\hat{\mu}_{M,D}(\xi)=\prod_{j=1}^\infty m_D({M^{*}}^{-j}\xi), \ \ \ (\xi\in\mathbb{R}^n)
\end{equation}
where $M^*$ denotes the transposed  conjugate of $M$, and
$$
m_D(x)=\frac{1}{\#(D)}\sum\limits_{d\in D}{e^{2\pi i\langle d,x\rangle}},\;(x\in\mathbb{R}^n).
$$
 For any $\lambda_1, \lambda_2\in \mathbb{R}^n$, $\lambda_1\neq \lambda_2$, the orthogonality condition
$$
0=\langle e^{2\pi i \langle \lambda_1,x\rangle},e^{2\pi i \langle \lambda_2,x\rangle}\rangle_{L^2(\mu_{M,D})}
=\int e^{2\pi i \langle\lambda_1-\lambda_2,x\rangle}d\mu_{M,D}(x)=\hat{\mu}_{M,D}(\lambda_1-\lambda_2)
$$
relates to the zero set $\mathcal{Z}(\hat{\mu}_{M,D})$ directly.
It is easy to see that for a countable subset
$\Lambda\subset\mathbb{R}^n$,  $\mathcal {E}_\Lambda=\{e^{2\pi
i\langle\lambda,x\rangle}:\lambda\in\Lambda\}$ is an orthonormal
family of $L^2(\mu_{M,D})$ if and only if
 \begin{equation}\label {eq(2.2)}
 (\Lambda-\Lambda)\setminus\{0\}\subset\mathcal{ Z}(\hat{\mu}_{M,D}).
\end{equation}
From \eqref{eq(2.1)}, we have
$\mathcal{Z}(\hat{\mu}_{M,D})=\Big\{\xi\in \mathbb{R}^n: \exists\; j\in \mathbb{N} \ {\rm {such  \ that}}\ m_D(M^{*-j}\xi)=0\Big\}$.
Hence
\begin{equation}\label {eq(2.3)}
\mathcal{ Z}(\hat{\mu}_{M,D})=\bigcup_{j=1}^{\infty}M^{*j}(\mathcal{ Z}(m_D)),
\end{equation}
where $\mathcal{ Z}(m_D)=\{x\in\mathbb{R}^n:m_D(x)=0\}$.

The following lemma  can be used to determine the case that there
are only finite orthogonal exponential functions in
$L^2(\mu_{M,D})$.

\begin{lemma}\cite[Theorem 3.1]{Dutkay-Jorgensen}\label{th(2.1)}\;\; For a $n\times n$ expanding integer matrix $M$
and a finite digit set $D\subset \mathbb{Z}^n$, let $\mu_{M,D}$
and $\mathcal{Z}_D^n$ be  defined by \eqref{eq(1.1)} and
\eqref{eq(1.5)}, respectively. If $\mathcal{Z}_D^n$ is contained
in a set $\mathrm{Z}'\subset [0, 1)^n$ of finite cardinality
$\#(\mathrm{Z}')$, which does not contain $0$, and that satisfies
the property
$$
M^{*}(\mathrm{Z}'+\mathbb{Z}^n)\subseteq \mathrm{Z}'+\mathbb{Z}^n ,
$$
then there exist at most $\#(\mathrm{Z}')+1$ mutually orthogonal exponential functions in $L^2(\mu_{M,D})$.
In particular, $\mu_{M,D}$ is not a spectral measure.
\end{lemma}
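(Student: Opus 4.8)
The plan is to localise the zero set $\mathcal Z(\hat\mu_{M,D})$ of the Fourier transform inside a fixed union of cosets of $\mathbb Z^{n}$, and then read off the cardinality bound by counting modulo $\mathbb Z^{n}$. Combining \eqref{eq(2.3)} with the periodicity identity \eqref{eq(1.6)} gives $\mathcal Z(\hat\mu_{M,D})=\bigcup_{j\ge1}M^{*j}\bigl(\mathcal Z_{D}^{n}+\mathbb Z^{n}\bigr)$. Since $\mathcal Z_{D}^{n}\subseteq\mathrm{Z}'$, each term on the right lies in $M^{*j}(\mathrm{Z}'+\mathbb Z^{n})$, and a one-line induction on $j$ promotes the hypothesis $M^{*}(\mathrm{Z}'+\mathbb Z^{n})\subseteq\mathrm{Z}'+\mathbb Z^{n}$ to $M^{*j}(\mathrm{Z}'+\mathbb Z^{n})\subseteq\mathrm{Z}'+\mathbb Z^{n}$ for every $j\ge1$ (the inductive step being $M^{*(j+1)}(\mathrm{Z}'+\mathbb Z^{n})=M^{*}\bigl(M^{*j}(\mathrm{Z}'+\mathbb Z^{n})\bigr)\subseteq M^{*}(\mathrm{Z}'+\mathbb Z^{n})\subseteq\mathrm{Z}'+\mathbb Z^{n}$). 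Taking the union yields $\mathcal Z(\hat\mu_{M,D})\subseteq\mathrm{Z}'+\mathbb Z^{n}$.

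Next I would take an arbitrary set $\mathcal E_{\Lambda}$ of mutually orthogonal exponential functions in $L^{2}(\mu_{M,D})$. Replacing $\Lambda$ by $\Lambda-\lambda_{0}$ for a fixed $\lambda_{0}\in\Lambda$ alters neither $\Lambda-\Lambda$ (hence, by \eqref{eq(2.2)}, not the orthogonality) nor $\#\Lambda$, so I may assume $0\in\Lambda$. Then \eqref{eq(2.2)} and the previous paragraph give $(\Lambda-\Lambda)\setminus\{0\}\subseteq\mathrm{Z}'+\mathbb Z^{n}$, in particular $\Lambda\setminus\{0\}\subseteq\mathrm{Z}'+\mathbb Z^{n}$. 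The heart of the matter is that the reduction map $\Lambda\to\mathbb R^{n}/\mathbb Z^{n}$ is injective: if $\lambda\neq\lambda'$ in $\Lambda$ were congruent modulo $\mathbb Z^{n}$, then $\lambda-\lambda'$ would be a nonzero lattice point lying in $\mathrm{Z}'+\mathbb Z^{n}$, which would force some point of $\mathrm{Z}'$ into $\mathbb Z^{n}$; this is impossible since $\mathrm{Z}'\subseteq[0,1)^{n}$ and $0\notin\mathrm{Z}'$, so $\mathrm{Z}'\cap\mathbb Z^{n}=\emptyset$. As the reduction map is also injective on $\mathrm{Z}'\subseteq[0,1)^{n}$, the image of $\Lambda$ lies in the set $\{0\}\cup(\mathrm{Z}'\bmod\mathbb Z^{n})$, which has at most $\#(\mathrm{Z}')+1$ elements; hence $\#\Lambda\le\#(\mathrm{Z}')+1$. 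Finally, since $L^{2}(\mu_{M,D})$ is infinite-dimensional ($\mu_{M,D}$ being non-atomic in the non-degenerate case $\#(D)\ge2$), a spectrum would be an infinite orthogonal set of exponentials, which this bound excludes; hence $\mu_{M,D}$ is not a spectral measure.

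I do not anticipate a genuine obstacle: the argument is entirely bookkeeping modulo $\mathbb Z^{n}$, and the one point requiring care is the injectivity of $\Lambda\to\mathbb R^{n}/\mathbb Z^{n}$ used above — this is precisely what converts the a priori information ``$\Lambda$ is spread over the infinitely many cosets of $\mathbb Z^{n}$ indexed by $\{0\}\cup\mathrm{Z}'$'' into the finite bound $\#(\mathrm{Z}')+1$.
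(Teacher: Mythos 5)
The paper does not prove this lemma at all --- it is quoted verbatim from Dutkay and Jorgensen \cite[Theorem 3.1]{Dutkay-Jorgensen} as an external black box --- so there is no in-paper proof to compare against. Your argument is correct and is essentially the standard one for this result: the forward-invariance of $\mathrm{Z}'+\mathbb{Z}^n$ under $M^*$ traps $\mathcal{Z}(\hat\mu_{M,D})$ inside $\mathrm{Z}'+\mathbb{Z}^n$, and since $\mathrm{Z}'\cap\mathbb{Z}^n=\emptyset$ the translated orthogonal set $\Lambda$ (normalized so $0\in\Lambda$) injects into the $\#(\mathrm{Z}')+1$ residues $\{0\}\cup(\mathrm{Z}'\bmod\mathbb{Z}^n)$; all the individual steps (the induction, the injectivity of reduction mod $\mathbb{Z}^n$, and the infinite-dimensionality of $L^2(\mu_{M,D})$ for $\#(D)\geq 2$ ruling out a finite orthogonal basis) are sound and correctly justified. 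No gaps.
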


\begin{pro} \label{th(2.2)}
Let $p\geq 2$ be a positive integer  and $\mathring{E}_p^2$ be
defined by \eqref{eq(1.3)}, and let $M\in M_2(\mathbb{Z})$ with
$\gcd(\det(M),p)=1$. Then $M(p \mathring{E}_p^2)=
p\mathring{E}_p^2$ $({\rm mod}\;p\Bbb Z^2)$, equivalently, $M(
\mathring{E}_p^2)= \mathring{E}_p^2$ $({\rm mod} \; \Bbb Z^2)$.
\end{pro}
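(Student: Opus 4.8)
The plan is to work entirely modulo $\mathbb{Z}^2$ (equivalently, with the scaled set $p\mathring{E}_p^2$ modulo $p\mathbb{Z}^2$) and to exploit that $\gcd(\det(M),p)=1$ makes $M$ invertible as a linear map on $(\mathbb{Z}/p\mathbb{Z})^2$. First I would observe that $p E_p^2 = \{(l_1,l_2)^t : 0\le l_1,l_2\le p-1\}$ is a complete set of representatives for $(\mathbb{Z}/p\mathbb{Z})^2$, so proving $M(pE_p^2)=pE_p^2\ (\mathrm{mod}\ p\mathbb{Z}^2)$ is the same as proving that the reduction $\bar{M}$ of $M$ modulo $p$ is a bijection of $(\mathbb{Z}/p\mathbb{Z})^2$. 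Since $\det(\bar M)\equiv\det(M)\ (\mathrm{mod}\ p)$ and $\gcd(\det(M),p)=1$, we have $\det(\bar M)\in(\mathbb{Z}/p\mathbb{Z})^\times$, so $\bar M\in GL_2(\mathbb{Z}/p\mathbb{Z})$; in particular it is a bijection. This already gives $M(pE_p^2)=pE_p^2\ (\mathrm{mod}\ p\mathbb{Z}^2)$, hence after dividing by $p$, $M(E_p^2)=E_p^2\ (\mathrm{mod}\ \mathbb{Z}^2)$.

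The only remaining point is to check that the distinguished element $0$ is preserved, so that the bijection restricts to the punctured set $\mathring{E}_p^2 = E_p^2\setminus\{0\}$. This is immediate: $M\cdot 0 = 0$, and conversely if $Mx\equiv 0\ (\mathrm{mod}\ p\mathbb{Z}^2)$ with $x\in pE_p^2$ then, applying the inverse of $\bar M$ in $GL_2(\mathbb{Z}/p\mathbb{Z})$, we get $x\equiv 0$. Therefore $M$ maps $pE_p^2\setminus\{0\}$ bijectively onto itself modulo $p\mathbb{Z}^2$, i.e. $M(p\mathring{E}_p^2)=p\mathring{E}_p^2\ (\mathrm{mod}\ p\mathbb{Z}^2)$, and equivalently $M(\mathring{E}_p^2)=\mathring{E}_p^2\ (\mathrm{mod}\ \mathbb{Z}^2)$.

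There is essentially no hard step here; the proposition is a clean linear-algebra-over-$\mathbb{Z}/p\mathbb{Z}$ statement. The one place that deserves a careful sentence rather than a wave of the hand is the passage between the two formulations — why $M(pE_p^2)\equiv pE_p^2\pmod{p\mathbb{Z}^2}$ and $M(E_p^2)\equiv E_p^2\pmod{\mathbb{Z}^2}$ are equivalent — which follows because $x\mapsto px$ is a bijection carrying $E_p^2$ onto $pE_p^2$ and carrying the relation "congruent mod $\mathbb{Z}^2$" to "congruent mod $p\mathbb{Z}^2$", and because $M$ commutes with this scaling. I would write the argument in the $(\mathbb{Z}/p\mathbb{Z})^2$ language to keep it short, noting explicitly that the surjectivity half (not just injectivity) is what is needed, and that finiteness of $(\mathbb{Z}/p\mathbb{Z})^2$ lets injectivity and surjectivity be used interchangeably once $\det\bar M$ is a unit.
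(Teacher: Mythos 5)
Your proof is correct and follows essentially the same route as the paper's: both arguments reduce the statement to the fact that $\gcd(\det(M),p)=1$ makes $M$ invertible modulo $p$ (the paper uses the adjugate in the form $\det(M)M^{-1}\in M_2(\mathbb{Z})$ to show a nonzero class cannot map to $0$). The only cosmetic difference is that the paper obtains surjectivity from injectivity plus finiteness of $p\mathring{E}_p^2$, whereas you invoke the inverse of $\bar{M}$ in $GL_2(\mathbb{Z}/p\mathbb{Z})$ directly --- which is also valid for composite $p$, since a matrix over $\mathbb{Z}/p\mathbb{Z}$ is invertible exactly when its determinant is a unit.
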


\begin{proof} We first prove $M( p\mathring{E}_p^2)\subset p\mathring{E}_p^2$ $($mod $p\mathbb{Z}^2$$)$.
Suppose on the contrary that there exist  $(l_1,l_2)^t\in p
\mathring{E}_p^2$ and $v_0\in \mathbb{Z}^2$ such that
\begin{equation}\label{eq(2.4)}
M\left( {\begin{array}{*{20}{c}}
l_1\\
l_2
\end{array}} \right)=pv_0.
\end{equation}
Multiplying $\det(M)p^{-1}M^{-1}$ on both sides of   \eqref{eq(2.4)}, we get
\begin{equation*}
\frac{\det(M)}{p}\left( {\begin{array}{*{20}{c}}
l_1\\
l_2
\end{array}} \right)=\det(M)M^{-1}v_0.
\end{equation*}
Obviously, $\det(M)M^{-1}v_0\in\mathbb{Z}^2$, but the left side of the above equation can not be a integer vector
since $\gcd(\det(M),p)=1$ and $(l_1,l_2)^t\in p \mathring{E}_p^2$, this contradiction shows that
$M( p\mathring{E}_p^2)\subset p\mathring{E}_p^2$ $($mod $p\mathbb{Z}^2$$)$.

We now prove $M( p \mathring{E}_p^2)= p \mathring{E}_p^2\;({\rm mod} \;p\mathbb{Z}^2)$.
For any $\lambda\neq \lambda'\in p \mathring{E}_p^2$, it follows from $(\lambda-\lambda')\in p \mathring{E}_p^2\;({\rm mod} \; p
\mathbb{Z}^2)$ and $M( p \mathring{E}_p^2)\subset p \mathring{E}_p^2$ $($mod $p\mathbb{Z}^2$$)$ that $M\lambda\not=M\lambda'
\;({\rm mod} \;p\mathbb{Z}^2)$. Hence $\#(M( p\mathring{E}_p^2) \ ({\rm mod} \;p\mathbb{Z}^2))= \#(p \mathring{E}_p^2)$, this gives that
$M( p \mathring{E}_p^2)= p \mathring{E}_p^2\;({\rm mod} \;p \mathbb{Z}^2)$ since $M(p \mathring{E}_p^2)\subset p \mathring{E}_p^2\;({\rm mod} \;p \mathbb{Z}^2)$.
It also shows that
$M( \mathring{E}_p^2)= \mathring{E}_p^2$ $({\rm mod} \; \Bbb Z^2)$.
\end{proof}

To prove that the number $p^2$ is the best in Theorem \ref{th(1.1)}, we need the following knowledge related to the theory of number and algebra.

For a positive number $m$, let $\varphi(m)$ denote the \emph{Euler's phi function} ( also be called \emph{Euler's totient function}) which  equal to the number
of integers in the set $\{1,2,\cdots, m-1\}$ that are relatively prime to $m$.  For more information about the Euler's phi function,
the reader can refer to \cite{Nathanson_1996}.
The following lemma is the famous \emph{Euler's theorem}.

\begin{lemma}\cite[Theorem 2.12]{Nathanson_1996}\label{th(2.3)}\;\; Let $m$ be a positive integer, and let $a$ be an integer
relatively prime to $m$. Then $a^{\varphi(m)}=1({\rm mod} \ m).$
\end{lemma}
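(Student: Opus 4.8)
The plan is to prove this classical congruence by the \emph{reduced residue system} argument. I would fix a set $\{r_1,r_2,\dots,r_{\varphi(m)}\}$ of representatives of exactly those residue classes modulo $m$ that are relatively prime to $m$. The first step is to check that, because $\gcd(a,m)=1$, the map $x\mapsto ax$ permutes these classes modulo $m$: each $ar_i$ is again coprime to $m$ (a product of two integers coprime to $m$), and $ar_i\equiv ar_j\pmod m$ forces $r_i\equiv r_j\pmod m$ after multiplying through by a multiplicative inverse of $a$ modulo $m$, hence $i=j$. Consequently $\{ar_1,\dots,ar_{\varphi(m)}\}$ is again a reduced residue system modulo $m$.

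The second step is to compare the two products. Since the two systems represent the same collection of classes,
\[
a^{\varphi(m)}\prod_{i=1}^{\varphi(m)} r_i \;=\; \prod_{i=1}^{\varphi(m)}(ar_i) \;\equiv\; \prod_{i=1}^{\varphi(m)} r_i \pmod m .
\]
Setting $R=\prod_{i=1}^{\varphi(m)} r_i$, which is coprime to $m$ and hence invertible modulo $m$, I would cancel $R$ from both sides and obtain $a^{\varphi(m)}\equiv 1\pmod m$, as desired.

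The only point that requires genuine care — the ``hard part,'' such as it is at this level — is the existence of multiplicative inverses of $a$ and of $R$ modulo $m$; this is exactly where the coprimality hypothesis is used, and it follows from B\'ezout's identity: $\gcd(x,m)=1$ yields integers $s,t$ with $sx+tm=1$, so $s$ represents the inverse of $x$ modulo $m$. A shorter alternative would be to argue group-theoretically: $(\mathbb{Z}/m\mathbb{Z})^{\times}$ is a finite abelian group of order $\varphi(m)$, so by Lagrange's theorem the multiplicative order of $a$ divides $\varphi(m)$ and therefore $a^{\varphi(m)}=1$ in that group. Since the paper only cites Euler's theorem in what follows, I would present the elementary product argument as the most self-contained route.
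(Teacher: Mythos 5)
Your proof is correct: this is the classical reduced-residue-system argument for Euler's theorem, and every step (multiplication by $a$ permuting the reduced residues, the product comparison, and cancellation of the invertible product $R$ via B\'ezout) is sound. Note that the paper itself offers no proof at all --- Lemma 2.3 is simply cited from Nathanson's textbook as a known result --- so there is nothing to compare against; your self-contained argument (or the even shorter Lagrange-theorem version you mention) is exactly the standard one the cited reference contains.
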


For a prime $p$, let $\mathbb{F}_p:=\mathbb{Z}/p\mathbb{Z}$ denote  the residue class fields.
All nonsingular $n\times n$ matrices over $\mathbb{F}_p$ form a finite group under matrix multiplication,
called the \emph{ general linear group} $GL(n,\mathbb{F}_p)$.

\begin{de}\label{de(2.4)}
Let $f(x)\in \mathbb{F}_p[x]$ be a nonzero polynomial. If $f(0)\neq0$, then
the least positive integer $q$ for which $f(x)$ divides $x^q-1$ is called the order of
$f$ and denoted by $ord(f)$.
\end{de}

\begin{de}\label{de(2.5)}
Let $M\in GL(n,\mathbb{F}_p)$, then the least positive integer $s$ for which $M^s=I$ is called the order of
$M$ and denoted by $O(M)$, where $I$ is the  identity matrix in $GL(n,\mathbb{F}_p)$.
\end{de}

Let $f(x)=x^n-a_{n-1}x^{n-1}-\cdots-a_1x-a_0 \in \mathbb{F}_p[x]$ and  $f(0)\neq0$, the companion matrix $M$ of $f(x)$ is defined by
\begin{align*}
        M:=\begin{bmatrix}
0&0&0&\cdots &0 &a_0\\
1&0&0&\cdots &0 &a_1\\
0&1&0&\cdots &0 &a_2\\
\vdots & \vdots & \vdots & \ddots & \vdots& \vdots \\
0&0&0&\cdots &1 &a_{n-1}\\
  \end{bmatrix}.
\end{align*}
It follows from Theorem 6.26 in \cite{Lidel-Niederreiter_1994} that $M\in GL(n,\mathbb{F}_p)$ and $ord(f)=O(M)$.
Further, for any positive integer $n\geq 1$,  if $f(x)\in \mathbb{F}_p[x]$ is a primitive polynomial of degree $n$,
then  by Theorem 3.16 in \cite{Lidel-Niederreiter_1994}, we know that $f$ is monic,  $f(0)\neq0$ , and $ord(f)=p^n-1$.
It is well known that there exist $\varphi(p^n-1)/n$ primitive polynomials with degree $n$ over $\mathbb{F}_p$(see $P_{87}$
Theorem 4.1.3 of \cite{Mullen-Panario_2013} ), where $\varphi$ is  Euler's phi function.

The following Proposition can be proved more easily by using the group and orbit theory, however,
we will prove it directly by avoiding to introduce  more definitions and notations.

\begin{pro}\label{th(2.6)}
Let $p$ be a prime, then there exists a matrix $M\in  M_2(\mathbb{Z})$ with $\gcd(\det(M),p)=1$ such that
\begin{align} \label{eq(2.5)}
\bigcup_{j=1}^{p^2-1}\{M^j\lambda\}=\mathring{E}_p^2\;({\rm mod}\;\mathbb{Z}^2)\;\;\;{\rm for\;all}\;\;\lambda\in \mathring{E}_p^2.
\end{align}
\end{pro}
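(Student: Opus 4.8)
The plan is to recast \eqref{eq(2.5)} as a statement about a ``Singer cycle'' in $GL(2,\mathbb{F}_p)$ and then exhibit one explicitly. Reduction modulo $\mathbb{Z}^2$ identifies $\mathring{E}_p^2\ (\mathrm{mod}\ \mathbb{Z}^2)$ with the set of nonzero residue vectors $\mathbb{F}_p^2\setminus\{0\}$ via $\frac1p(l_1,l_2)^t\mapsto (l_1,l_2)^t\ (\mathrm{mod}\ p)$, and for an integer matrix $M$ with $\gcd(\det M,p)=1$ the map $\lambda\mapsto M\lambda$ on $\mathring{E}_p^2\ (\mathrm{mod}\ \mathbb{Z}^2)$ corresponds to the action of the reduction $\overline{M}\in GL(2,\mathbb{F}_p)$ on $\mathbb{F}_p^2\setminus\{0\}$. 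Hence \eqref{eq(2.5)} is equivalent to the assertion that the cyclic group $\langle\overline{M}\rangle$ acts transitively on $\mathbb{F}_p^2\setminus\{0\}$, and it suffices to produce one such $M$. (One slick way to see such matrices exist is to identify $\mathbb{F}_p^2$ with the field $\mathbb{F}_{p^2}$ and let $\overline{M}$ be multiplication by a generator of $\mathbb{F}_{p^2}^\times$; following the paper's stated preference I would instead argue directly with companion matrices.)

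Concretely, I would invoke the count of primitive polynomials recalled above: there is a primitive polynomial $f(x)=x^2-a_1x-a_0\in\mathbb{F}_p[x]$ of degree $2$. Choosing representatives $a_0,a_1\in\{0,1,\dots,p-1\}$ and setting $M=\left(\begin{smallmatrix}0&a_0\\1&a_1\end{smallmatrix}\right)\in M_2(\mathbb{Z})$ makes $\overline{M}$ exactly the companion matrix of $f$ over $\mathbb{F}_p$. Primitivity forces $f(0)=-a_0\neq 0$ in $\mathbb{F}_p$, so $\det M=-a_0$ is prime to $p$; and by Theorems $6.26$ and $3.16$ of \cite{Lidel-Niederreiter_1994}, $O(\overline{M})=\mathrm{ord}(f)=p^2-1$.

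It then remains to verify transitivity, which is the heart of the argument. Write $e_1=(1,0)^t$, $e_2=(0,1)^t$ and note $\overline{M}e_1=e_2$. If $\overline{M}^m e_1=e_1$ for some $0<m<p^2-1$, then $\overline{M}^m e_2=\overline{M}^m\overline{M}e_1=\overline{M}\,\overline{M}^m e_1=\overline{M}e_1=e_2$, so $\overline{M}^m$ fixes the basis $\{e_1,e_2\}$ and hence $\overline{M}^m=I$, contradicting $O(\overline{M})=p^2-1$. Therefore the vectors $\overline{M}e_1,\overline{M}^2e_1,\dots,\overline{M}^{p^2-1}e_1$ are pairwise distinct and nonzero, so they exhaust $\mathbb{F}_p^2\setminus\{0\}$. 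For an arbitrary $v\in\mathbb{F}_p^2\setminus\{0\}$, write $v=\overline{M}^{j_0}e_1$; then, using $\overline{M}^{p^2-1}=I$ to reindex, $\{\overline{M}^jv:1\le j\le p^2-1\}=\{\overline{M}^{j+j_0}e_1:1\le j\le p^2-1\}=\{\overline{M}^ke_1:1\le k\le p^2-1\}=\mathbb{F}_p^2\setminus\{0\}$. Undoing the identification with $\mathbb{F}_p^2$ gives \eqref{eq(2.5)} for all $\lambda\in\mathring{E}_p^2$. The one genuinely non-formal point is this transitivity step: knowing $O(\overline{M})=p^2-1$ only tells us the orbit of $e_1$ has size dividing $p^2-1$, and one really does need the companion relation $\overline{M}e_1=e_2$ to upgrade ``$\overline{M}^m$ fixes $e_1$'' to ``$\overline{M}^m$ fixes all of $\mathbb{F}_p^2$'' and thereby conclude the orbit is full.
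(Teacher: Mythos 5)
Your proposal is correct and follows essentially the same route as the paper: take the companion matrix of a degree-$2$ primitive polynomial over $\mathbb{F}_p$, use $O(\overline{M})=\mathrm{ord}(f)=p^2-1$ together with the relation $\overline{M}e_1=e_2$ to show the orbit of $e_1$ exhausts $\mathbb{F}_p^2\setminus\{0\}$, and then reindex to pass to an arbitrary $\lambda$. Your handling of the key transitivity step (if $\overline{M}^m$ fixes $e_1$ then it fixes $e_2$, hence equals $I$) is a cleaner packaging of the paper's $k_1=k_2=p^2-1$ argument, but it is the same idea.
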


\begin{proof}
 Let $f(x)=x^2-a_1x-a_0\in \mathbb{F}_p[x]$ be a primitive polynomial and
$ M=\begin{bmatrix}
0&a_0\\
1&a_1
\end{bmatrix}
$
be the companion matrix of $f(x)$, then $M\in GL(2,\mathbb{F}_p)$ and $ord(f)=O(M)=p^2-1$.
Moreover,  $\gcd(\det(M),p)=\gcd(-a_0,p)=1$ because
$f(0)\neq 0$ and $p$ is prime.

In order to prove the matrix $M$ satisfies \eqref{eq(2.5)}, we first need to prove
\begin{align}\label{eq(2.6)}
\Big\{M\frac{e_1}{p},M^2\frac{e_1}{p},\cdots,M^{p^2-1}\frac{e_1}{p}\Big\}=\mathring{E}_p^2\;({\rm mod} \;\mathbb{Z}^2),
\end{align}
where $e_1=(1,0)^t$. Assume that the \eqref{eq(2.6)} has been
proved. For any $\lambda\in \mathring{E}_p^2$, \eqref{eq(2.6)}
implies that there exist $k_0\in \{1,\cdots,p^2-1\}$ and $v_0\in
\mathbb{Z}^2$  such that $M^{k_0}\frac{e_1}{p}=\lambda+v_0$. By $\gcd(\det(M),p)=1$ and
Proposition \ref{th(2.2)}, we have
\begin{align*}
\mathring{E}_p^2&=M^{k_0}(\mathring{E}_p^2)=M^{k_0}(\{M\frac{e_1}{p},M^2\frac{e_1}{p},\cdots,M^{p^2-1}\frac{e_1}{p}\}) \;({\rm mod} \;\mathbb{Z}^2)\\
&=\{M(\lambda+v_0),M^{2}(\lambda+v_0),\cdots,M^{p^2-1}(\lambda+v_0)\} \;({\rm mod} \;\mathbb{Z}^2)\\
&=\{M\lambda,M^{2}\lambda,\cdots,M^{p^2-1}\lambda\}\;({\rm mod} \;\mathbb{Z}^2).
\end{align*}
This indicates that $M$ is exactly what we need.
\par
We now prove \eqref{eq(2.6)}. Let $e_2=(0,1)$, and write
$$
k_1=\min\{k: M^ke_1=e_1({\rm mod} \ p\mathbb{Z}^2)\}, \ \ \ \ \ \ \ \ \ \ k_2=\min\{k: M^ke_2=e_2({\rm mod} \ p\mathbb{Z}^2)\}.
$$
Obviously, $1\leq k_1,k_2\leq p^2-1$ as $O(M)=p^2-1$. We conclude
\begin{align}\label{eq(2.7)}
\{Me_1,\cdots,M^{k_1}e_1\}\;({\rm mod} \; p \mathbb{Z}^2)\;{\rm contains \; exact}\;k_1\;{\rm different\; elements}.
\end{align}
Suppose, on the contrary, that there exist $1\leq l<l'\leq k_1$ and $v_0\in\mathbb{Z}^2$
such that $M^{l'}e_1=M^le_1+pv_0$, i.e., $M^l(M^{l'-l}e_1-e_1)=pv_0.$
Noting that $(M^{l'-l}e_1-e_1)\in p E_p^2\;({\rm mod}\;p\mathbb{Z}^2)$, by  Proposition \ref{th(2.2)},
we have $M^{l-l'}e_1-e_1=0\;({\rm mod} \;p\mathbb{Z}^2)$, which contradicts with the definition of $k_1$. Hence \eqref{eq(2.7)} holds. Similarly, we have
\begin{align}\label{eq(2.8)}
\{Me_2,\cdots,M^{k_2}e_2\}\;({\rm mod} \; p \mathbb{Z}^2)\;{\rm contains \; exact}\;k_2\;{\rm different\; elements}.
\end{align}
We further claim that
\begin{align}\label{eq(2.9)}
k_1=k_2=p^2-1\;.
\end{align}
If $k_1<k_2$,  then by using $Me_1=e_2$ (a directly check), we have $M^{k_1+1}e_2=M e_2$, this contradicts with \eqref{eq(2.8)}. If $k_1>k_2$,
then  $M^{k_2+1}e_1=M e_1$, this contradicts with \eqref{eq(2.7)}. So the claim $k_1=k_2$ follows.
From the definitions of $k_1,k_2$, we have $M^{k_1}\{e_1,e_2\}=\{e_1,e_2\}$ (mod $p \mathbb{Z}^2$),
hence $M^{k_1}I=M^{k_1}=I$ (mod  $p M_2(\mathbb{Z})$) which shows \eqref{eq(2.9)} holds since $k_1\leq p^2-1$ and $O(M)=p^2-1$.

\par
It follows from Proposition \ref{th(2.2)} that $\{Me_1,M^2e_1,
\cdots,M^{p^2-1}e_1\}\subset p \mathring{E}_p^2 \;({\rm mod} \;p \mathbb{Z}^2)$. Since $\{Me_1,M^2e_1,\cdots,M^{p^2-1}e_1\}\;({\rm mod} \;p \mathbb{Z}^2)$
contains exact $p^2-1$ different elements by \eqref{eq(2.7)} and \eqref{eq(2.9)}, and since $\#(p \mathring{E}_p^2)=p^2-1$, we get
$\{Me_1,M^2e_1,\cdots,M^{p^2-1}e_1\}=p\mathring{E}_p^2\;({\rm mod} \;p \mathbb{Z}^2)$, and the required result \eqref{eq(2.6)} holds.
\par
This completes the proof of Proposition \ref{th(2.6)}.
\end{proof}

To prove the Theorems, we also need the following proposition.

\begin{pro}\label{th(2.7)}
For  a positive integer $p\geq 2$, let $E_p^2$ and $\widetilde{P}$ be defined by \eqref{eq(1.3)}
and \eqref{eq(1.8)}, respectively. Let $m=\widetilde{P}+1$ and $A=\{\lambda_1, \lambda_2,\cdots,\lambda_m\} \subset E_p^2$ with $\lambda_i\neq \lambda_j$ for any
$i\neq j$. Then $A-A=E_p^2$ $(${\rm mod} \ $\mathbb{Z}^2$$)$.
\end{pro}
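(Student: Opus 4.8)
\textbf{Proof proposal for Proposition \ref{th(2.7)}.}

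The plan is to argue by a counting/pigeonhole principle on the difference set $A-A$ taken modulo $\mathbb{Z}^2$, viewed inside the finite group $G:=\frac{1}{p}\mathbb{Z}^2/\mathbb{Z}^2\cong(\mathbb{Z}/p\mathbb{Z})^2$, which has exactly $p^2$ elements and which we identify with $E_p^2$. First I would note the elementary containment $A-A\subseteq E_p^2\pmod{\mathbb{Z}^2}$, together with $0\in A-A$ and the symmetry $-(A-A)=A-A$; so it suffices to show $A-A$ exhausts all $p^2$ classes. Set $S=A\pmod{\mathbb{Z}^2}\subseteq G$, so $\#S=m=\widetilde P+1$. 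The key observation is the standard sumset lower bound in a finite abelian group: for any nonempty $S\subseteq G$, the difference set $S-S$ generates the subgroup $H=\langle S-S\rangle$, and within $H$ one has $\#(S-S)\geq\min\{\#H,\,2\#S-1\}$; moreover if $S-S\subsetneq H$ then $S$ is contained in a coset of a proper subgroup of $H$, forcing $\#S\leq\frac{1}{2}\#H$ when $H=G$ (since index-$2$ is the largest proper-subgroup case giving the extremal configuration for $p$ even), or an even stronger bound when $p$ is odd and $G$ has no index-$2$ subgroup.

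Concretely, I would split into cases according to $p$. If $p$ is even, the only way $A-A$ can fail to be all of $G$ is if $A$ lies in a single coset of a proper subgroup $K\le G$; the largest such $K$ has index $2$, hence $\#A\le \#K=p^2/2$. But $m=\widetilde P+1=\tfrac{p^2}{2}+1>\tfrac{p^2}{2}$, a contradiction, so $A-A=G=E_p^2\pmod{\mathbb{Z}^2}$. If $p>3$ is odd, $G=(\mathbb{Z}/p\mathbb{Z})^2$ has proper subgroups of sizes $1$ and $p$ only (when $p$ is prime) or more generally its maximal proper subgroups have size $p^2/q$ for the smallest prime $q\mid p$; in every case a coset of a proper subgroup has at most $p^2/q\le p^2/3$ elements when $3\mid p$, but here $3\nmid$ the relevant modulus is not assumed, so instead I would use: if $A-A\ne G$ then $A$ is contained in a coset of a proper subgroup, whose order divides $p^2/q$ with $q$ the least prime factor of $p$; since $p$ is odd, $q\ge 3$, giving $\#A\le p^2/3<\tfrac{p^2-1}{2}+1=m$ for $p\ge 5$ — contradiction. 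For $p=3$: here $m=\widetilde P+1=4$, and $G=(\mathbb{Z}/3\mathbb{Z})^2$; a direct check shows any $4$-element subset $A$ of the $9$-element group $G$ has $A-A=G$, because the only way to avoid some nonzero class $g$ is to have $A$ meet each coset of $\langle g\rangle\cong\mathbb{Z}/3\mathbb{Z}$ in at most one point (else that difference lies in $\langle g\rangle$... ), forcing $A$ into a transversal-type configuration of size $\le 3$, contradicting $\#A=4$; I would do this small case by hand.

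The step I expect to be the main obstacle is the structural dichotomy ``$A-A=G$ or $A$ lies in a coset of a proper subgroup of $G$'': one direction (coset $\Rightarrow$ $A-A$ proper) is trivial, but the converse needs care. The clean way is: let $H=\langle A-A\rangle$; translating $A$ we may assume $0\in A$, so $A\subseteq H$ and $A-A\subseteq H$, and $H=\langle A\rangle$. If $H=G$ but $A-A\subsetneq G$, pick $g\in G\setminus(A-A)$; then the translates $A$ and $A+g$ are disjoint subsets of $G$, and iterating, $A, A+g, A+2g,\dots$ along the cyclic group $\langle g\rangle$ are pairwise disjoint, which forces $\#A\cdot\mathrm{ord}(g)\le\#G=p^2$, i.e. $\#A\le p^2/\mathrm{ord}(g)$. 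Since $\mathrm{ord}(g)$ divides $p$ (as $pG=0$ and $G$ is elementary-abelian when $p$ is prime; in the general case $\mathrm{ord}(g)\ge q$ the least prime dividing $p$), we get $\#A\le p^2/q$, and plugging the value of $q$ relative to the parity of $p$ yields the contradiction with $\#A=m=\widetilde P+1$ in each of the three regimes of \eqref{eq(1.8)}. This disjoint-translates argument is elementary and avoids invoking Kneser's theorem, so I would present it in that form.
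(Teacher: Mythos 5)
Your final conclusions are right and the underlying idea (disjoint translates of $A$ inside the $p^2$-element group $G=\tfrac1p\mathbb{Z}^2/\mathbb{Z}^2$) is the same counting idea the paper uses, but the structural claim you lean on is false as stated, and the argument you offer for it does not work. If $g\notin A-A$ you may conclude that $A$ and $A+g$ are disjoint, and by symmetry that $-g\notin A-A$; but the pairwise disjointness of $A, A+g, A+2g,\dots$ requires $(j-i)g\notin A-A$ for \emph{all} $1\le j-i\le \mathrm{ord}(g)-1$, and you only know this for $j-i\in\{1,\mathrm{ord}(g)-1\}$. Hence neither the bound $\#A\le p^2/\mathrm{ord}(g)$ nor the dichotomy ``$A-A=G$ or $A$ lies in a coset of a proper subgroup'' is justified, and both are in fact false once $\mathrm{ord}(g)\ge 4$: for $p=5$ take $A=\{0,1\}\times(\mathbb{Z}/5\mathbb{Z})$, so $\#A=10$, $A-A=\{-1,0,1\}\times(\mathbb{Z}/5\mathbb{Z})$ misses $g=(2,0)$ of order $5$, yet $10>25/5$ and $A$ lies in no coset of a proper subgroup. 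This is a genuine gap in the $p$ even and $p>3$ odd cases as you wrote them.

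The repair is easy and brings you exactly to the paper's proof. From $g\notin A-A$ alone you get the single disjoint translate $A+g$, hence $2\#A\le p^2$; since $m=\widetilde P+1$ equals $\tfrac{p^2}{2}+1$ for $p$ even and $\tfrac{p^2+1}{2}$ for $p>3$ odd, this already gives $2m>p^2$, a contradiction, with no subgroup considerations needed. (This is precisely the paper's step: it sets $\beta_i=\lambda_i-\tau$ and observes $\{\beta_1,\dots,\beta_m\}\subset E_p^2\setminus A$.) For $p=3$ your argument is sound, because every nonzero $g$ has order $3$, so $\langle g\rangle\setminus\{0\}=\{g,-g\}$ and both elements are excluded from $A-A$; thus $A,A+g,A+2g$ really are pairwise disjoint and $3\cdot4>9$. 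The paper phrases this slightly differently — it puts both $A-\tau$ and $A+\tau$ inside the $5$-element complement of $A$, forces an intersection by counting, and turns that intersection into $\lambda_{i}-\lambda_{j}=2\tau=-\tau\in A-A$, contradicting $-\tau\notin A-A$ — but the two versions are equivalent. So: correct endpoint, same essential mechanism, but the general-$p$ justification you gave needs to be replaced by the two-translate count.
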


\begin{proof}
 Obviously, $(A-A)\subseteq E_p^2$ (mod $\mathbb{Z}^2)$. If we suppose, on the contrary, that $(A-A)\subsetneqq E_p^2$ (mod $\mathbb{Z}^2)$,
then there exists $\tau\in \mathring{E}_p^2$ satisfies $\tau\not\in (A-A)$ (mod $\mathbb{Z}^2)$. We claim that $-\tau\not\in (A-A)$ (mod $\mathbb{Z}^2)$, if not,  there are $1\leq i_0\neq j_0\leq m$ and $v_0\in \mathbb{Z}^2$ such that
$-\tau=\lambda_{i_0}-\lambda_{j_0}+v_0$. Hence $\tau=\lambda_{j_0}-\lambda_{i_0}-v_0$, this implies that $\tau\in (A-A)$ (mod $\mathbb{Z}^2) $, which contradicts with $\tau\notin (A-A)$ (mod $\mathbb{Z}^2$). So the claim $-\tau\not\in (A-A)$ (mod $\mathbb{Z}^2)$ follows.
\par
Let $\beta_i=\lambda_i-\tau, \;\gamma_i=\lambda_i+\tau$ for $i=1,2,\cdots,m$. Then for each $i\in\{1,2,\cdots,m\}$, we have $\beta_i, \gamma_i \notin A$ (mod $\mathbb{Z}^2)$. Otherwise if $\beta_{i_0}\in A$ (mod $\mathbb{Z}^2)$, then $\beta_{i_0}-\lambda_{i_0}\in (A-A)$ (mod $\mathbb{Z}^2$), which contradicts with $\beta_{i_0}-\lambda_{i_0}=-\tau\not\in (A-A)$ (mod $\mathbb{Z}^2$). Similarly, $\gamma_{i_0}\in A$ (mod $\mathbb{Z}^2)$ also yields a contradiction. Thus we get
\begin{align}\label{eq(2.10)}
\{\beta_1,\;\cdots,\beta_m\}\cup\{\gamma_1,\cdots,\;\gamma_m\}\subset E_p^2\setminus A\;({\rm mod}\;\mathbb{Z}^2).
\end{align}
Noting that $\beta_i-\beta_j=\gamma_i-\gamma_j=\lambda_i-\lambda_j\not=0$ (mod $\mathbb{Z}^2$) for $i\neq j$, we have
\begin{align}\label{eq(2.11)}
\beta_i\not=\beta_j\; ({\rm mod} \;\mathbb{Z}^2) \quad {\rm and } \quad  \gamma_i\neq \gamma_j\; ({\rm mod} \;\mathbb{Z}^2)\quad {\rm if } \quad  i\not=j\;.
\end{align}
\par
(i)  \ If $p\neq 3 $, by the definition of $\widetilde{P}$ in \eqref{eq(1.8)}, we have $m=\frac{p^2}{2}+1$ when $p$ is even and $m=\frac{p^2-1}{2}+1$ when $p$ is odd. By \eqref{eq(2.10)}-\eqref{eq(2.11)}, we have $m=\#\Big\{\{\beta_1,\;\cdots,\beta_m\}\;({\rm mod} \;\mathbb{Z}^2)\Big\}\leq \#(E_p^2\setminus A)=p^2-m$, i.e., $2m\leq p^2$, this is impossible. Hence $(A-A)=E^2_p$(mod $\mathbb{Z}^2)$.
\par
(ii) \ If $p=3$, then $m=4$. By noting that $\#(E_3^2\setminus A)=5$ and \eqref{eq(2.10)}-\eqref{eq(2.11)}, there  exist at least one pair of $\beta_{i_1}$ and $\gamma_{j_1}$  such that $\beta_{i_1}=\gamma_{j_1}$(mod $\mathbb{Z}^2$), i.e., $\lambda_{i_1}-\tau=\lambda_{j_1}+\tau+v$, or $\lambda_{i_1}-\lambda_{j_1}-v=2\tau$  for some $v\in\mathbb{Z}^2$. This is equivalent to $\lambda_{i_1}-\lambda_{j_1}=-\tau$ (mod $\mathbb{Z}^2$) since $3\tau\in \mathbb{Z}^2$, it contradicts with $-\tau \notin (A-A)$ (mod $\mathbb{Z}^2$). This shows that $(A-A)=E^2_3$(mod $\mathbb{Z}^2)$.
\end{proof}

\section{\bf The proofs of the main theorems \label{sect.3}}

In this section, we first  prove  Theorem \ref{th(1.3)} by
Propositions \ref{th(2.2)} and \ref{th(2.7)}, and then prove
Theorem \ref{th(1.1)} by applying Proposition \ref{th(2.6)}.
Finally we prove   Theorem \ref{th(1.4)}.

\begin{proof}[The proof of Theorem \ref{th(1.3)}]
(i) We first prove $n^*(\mu_{M,D})\leq \widetilde{P}$ for $M\in
\mathfrak{M}_p^{(1)}$,  where $\widetilde{P}$ is given by
\eqref{eq(1.8)}. We will prove this by contradiction. Before the
proof, we first give some properties on the zeros of Fourier
transform $\hat{\mu}_{M,D}$.

For any $\lambda\in\mathcal{Z}_{D}^2\subset \mathring{E}_p^2$, by Proposition \ref{th(2.2)}, we see that $M^{*j}\lambda\in\mathring{E}_p^2$
(mod $\mathbb{Z}^2)$ for any $j$.  Note that $\mathring{E}_p^2$ has only $p^2-1$ different elements.
Then $\bigcup_{j=1}^{\infty}M^{*j}\lambda=\bigcup_{j=1}^{p^2-1}M^{*j}\lambda$ (mod $\mathbb{Z}^2)$, and so
\begin{align}\label{eq(3.1)}
\bigcup_{j=1}^{\infty}M^{*j}(\mathcal{Z}_{D}^2)=\bigcup_{j=1}^{p^2-1}M^{*j}(\mathcal{Z}_{D}^2) \ ({\rm{mod} }  \ \mathbb{Z}^2).
\end{align}
Since $M\in \mathfrak{M}_p^{(1)}$ is an integer matrix, it follows from  \eqref{eq(2.3)}, \eqref{eq(1.6)},  \eqref{eq(3.1)} and \eqref{eq(1.9)} that
\begin{align}\label{eq(3.2)}\nonumber
\mathcal{ Z}(\hat{\mu}_{M,D})&=\bigcup_{j=1}^{\infty}M^{*j}(\mathcal{ Z}(m_D))=\bigcup_{j=1}^{\infty}M^{*j}(\mathcal{Z}_{D}^2+\mathbb{Z}^2)\\
&\subset\left(\bigcup_{j=1}^{\infty}M^{*j}(\mathcal{Z}_{D}^2)\right)+\mathbb{Z}^2= \left(\bigcup_{j=1}^{p^2-1}M^{*j}
(\mathcal{Z}_{D}^2)\right) +\mathbb{Z}^2\subset \mathring{E}_p^2+\mathbb{Z}^2.
\end{align}

Suppose on the contrary that there exists $\Lambda'=\{0, \lambda_1, \lambda_2, \cdots, \lambda_{\widetilde{P}}\}\subset \Bbb R^2$ such that
$\mathcal {E}_{\Lambda'}=\{e^{2\pi i\langle\lambda,x\rangle}:\lambda\in \Lambda'\}$ is a orthogonal set in $L^2(\mu_{M,D})$.
The orthogonal condition implies that
$$
(\Lambda'-\Lambda')\setminus\{0\}\subset \mathcal{Z}(\widehat{\mu}_{M,D}),
$$
and hence $\lambda_1-0,\cdots, \lambda_{\widetilde{P}}-0 \in \mathring{E}_p^2$ (mod $\mathbb{Z}^2$) by \eqref{eq(3.2)}.
Because $\mathcal{Z}(\widehat{\mu}_{M,D})$ does not contain any integer, we have $\lambda_j-\lambda_k\notin \mathbb{Z}^2$,
i.e. $\lambda_j\neq\lambda_k$ (mod $\mathbb{Z}^2$) for $j\neq k$. This yields that $A:=\Lambda'$(mod $\mathbb{Z}^2$) has $\widetilde{P}+1$
different elements in $E^2_p$. By Proposition \ref{th(2.7)}, we have $A-A=E^2_p$ (mod $\mathbb{Z}^2)$, and hence
$$
\mathring{E}_p^2\subset \mathcal{Z}(\widehat{\mu}_{M,D}) \ {\rm ( mod}\;\mathbb{Z}^2{\rm )}.
$$
On the other hand, since
$\bigcup_{j=1}^{p^2-1}{M^*}^j\mathcal{Z}_{D}^2\subsetneqq
\mathring{E}_p^2$ for $M\in \mathfrak{M}_p^{(1)}$,
\eqref{eq(2.3)}, \eqref{eq(1.6)} and \eqref{eq(3.1)} yield that
$$
\mathcal{Z}(\widehat{\mu}_{M,D})=\bigcup_{j=1}^{\infty}M^{*j}(\mathcal{ Z}(m_D))=\bigcup_{j=1}^{\infty}M^{*j}(\mathcal{Z}_{D}^2+\mathbb{Z}^2)
=\bigcup_{j=1}^{p^2-1}{M^*}^j\mathcal{Z}_{D}^2\subsetneqq \mathring{E}_p^2 \ {\rm (mod}\;\mathbb{Z}^2{\rm )}.
$$
This contradiction shows that such $\mathcal {E}_{\Lambda'}$ does not exist. Hence there exist at most $\widetilde{P}$ mutually
orthogonal exponential functions in $L^2(\mu_{M,D})$, i.e.,  $n^*(\mu_{M,D})\leq \widetilde{P}$.

(ii) We now prove that $n^*(\mu_{M,D})=p^2$ for $M\in \mathfrak{M}_p^{(2)}$.

Firstly, we prove that $n^*(\mu_{M,D})\leq p^2$. Since $M$ is an integer matrix and $\gcd(\det(M),p)=1$, by Proposition \ref{th(2.2)}, we have
$M(\mathring{E}_p^2+\mathbb{Z}^2)= M (\mathring{E}_p^2)+M(\mathbb{Z}^2)\subset M (\mathring{E}_p^2)+\mathbb{Z}^2=\mathring{E}_p^2+\mathbb{Z}^2$.
It follows from  Proposition \ref{th(2.1)} and $\mathcal{Z}_{D}^2\subset \mathring{E}_p^2$ that
\begin{align}\label{eq(3.3)}
n^*(\mu_{M,D})\leq \#(\mathring{E}_p^2)+1=p^2.
\end{align}

Secondly, we prove that $n^*(\mu_{M,D})\geq p^2$.  We will prove it by finding out exact $p^2$ mutually orthogonal exponential functions in $L^2(\mu_{M,D})$.

Let $\det(M)=L$ and $\varphi(p)$ denote the Euler phi function.  It follows from  $\gcd(L,p)=1$  and Lemma \ref{th(2.3)} that there exist integer $n$ such that
 \begin{align}\label{eq(3.4)}
 L^{\varphi(p)}=np+1.
 \end{align}
 For any $\lambda\in \mathring{E}_p^2$, we first prove that
\begin{align}\label{eq(3.5)}
{M^*}^j\left(\lambda+\mathbb{Z}^2\right)\supset L^{\varphi(p)j} \left({M^*}^j\lambda+\mathbb{Z}^2\right),
\end{align}
which is equivalent to
\begin{align*}
\lambda+\mathbb{Z}^2\supset  L^{\varphi(p)j}M^{*^{-j}} \left({M^*}^j\lambda+\mathbb{Z}^2\right).
\end{align*}
Taking an arbitrary point $v_1\in\Bbb Z^2$, we have
\begin{align*}
 L^{\varphi(p)j}M^{*^{-j}}({M^*}^j\lambda+v_1)= L^{\varphi(p)j}\lambda+L^{\varphi(p)j}M^{*^{-j}}v_1.
\end{align*}
By \eqref{eq(3.4)}, we have $L^{\varphi(p)j}=(np+1)^j=pm+1$ for some integer $m$.
It follows from $L^{\varphi(p)j}M^{*^{-j}}\in M_2(\mathbb{Z})$ and $pm\lambda\in\mathbb{Z}^2$ that
\begin{align*}
L^{\varphi(p)j}\lambda+L^{\varphi(p)j}M^{*^{-j}}v_1=\lambda+pm\lambda+L^{\varphi(p)j}M^{*^{-j}}v_1\in \lambda+\mathbb{Z}^2.
\end{align*}
This shows that \eqref{eq(3.5)} holds.  Hence, form \eqref{eq(2.3)}, \eqref{eq(1.6)} and \eqref{eq(3.5)}, we deduce that
\begin{align}\label{eq(3.6)} \nonumber
\mathcal{ Z}(\hat{\mu}_{M,D})&=\bigcup_{j=1}^{\infty}M^{*j}(\mathcal{ Z}(m_D))=\bigcup_{j=1}^{\infty}M^{*j}(\mathcal{Z}_{D}^2+\mathbb{Z}^2) \\
&\supset\bigcup_{j=1}^{\infty}L^{\varphi(p)j}(M^{*j}\mathcal{Z}_{D}^2+\mathbb{Z}^2)\supset\bigcup_{j=1}^{p^2-1}L^{\varphi(p)j}(M^{*j}\mathcal{Z}_{D}^2+\mathbb{Z}^2).
\end{align}

Let $\Lambda=L^{\varphi(p)(p^2-1)}E^2_p$. We will show that ${\mathcal E}_{\Lambda}=\{e^{2\pi i\langle\lambda,x\rangle}:\lambda\in\Lambda\}$
is an orthogonal set in $L^2(\mu_{M,D})$.  For any $\lambda_1\neq \lambda_2 \in \Lambda$, there exists $\lambda'\in \mathring{E}_p^2$ (mod $ \mathbb{Z}^2$)
such that $\lambda_1-\lambda_2=L^{\varphi(p)(p^2-1)}\lambda'$. Since $\bigcup_{j=1}^{p^2-1}{M^*}^j\mathcal{Z}_{D}^2=\mathring{E}_p^2$ (mod $ \mathbb{Z}^2$)
for $M\in \mathfrak{M}_p^{(2)}$, there exist $\lambda_0\in\mathcal{Z}_{D}^2$ and $j_0\in \{1,\cdots,p^2-1\}$ such that
$\lambda'={M^*}^{j_0}\lambda_0$ (mod $ \mathbb{Z}^2$). Then
\begin{align}\label{eq(3.7)}
\lambda_1-\lambda_2&\in L^{\varphi(p)(p^2-1)}\left({M^*}^{j_0}\lambda_0+\mathbb{Z}^2\right)=L^{\varphi(p)j_0}\left(L^{\varphi(p)(p^2-1-j_0)}
({M^*}^{j_0}\lambda_0+\mathbb{Z}^2)\right).
\end{align}
By using \eqref{eq(3.4)} again, we have
$L^{\varphi(p)(p^2-1-j_0)}=pm'+1$ for some integer $m'$. It
follows from $pm'{M^*}^{j_0}\lambda_0\in\mathbb{Z}^2$ that
$L^{\varphi(p)(p^2-1-j_0)}({M^*}^{j_0}\lambda_0+\mathbb{Z}^2)=(pm'+1)({M^*}^{j_0}\lambda_0+\mathbb{Z}^2)
\subset {M^*}^{j_0}\lambda_0+\mathbb{Z}^2$. Hence, by
\eqref{eq(3.6)} and  \eqref{eq(3.7)}, we have
\begin{align*}
\lambda_1-\lambda_2&\in  L^{\varphi(p)j_0}({M^*}^{j_0}\lambda_0+\mathbb{Z}^2)\subset L^{\varphi(p)j_0}({M^*}^{j_0}\mathcal{Z}_{D}^2+\mathbb{Z}^2)\\
&\subset\bigcup_{j=1}^{p^2-1} L^{\varphi(p)j}({M^*}^{j}\mathcal{Z}_{D}^2+\mathbb{Z}^2)\subset \mathcal{Z}(\hat{\mu}_{M,D}).
\end{align*}
This shows that $(\Lambda-\Lambda)\setminus\{0\}\subset
\mathcal{Z}(\hat{\mu}_{M,D})$. It follows from \eqref{eq(2.2)}
that the elements in $\mathcal {E}_\Lambda$ are mutually
orthogonal, and hence $n^*(\mu_{M,D})\geq p^2$.
\par
Combining the above with  \eqref{eq(3.3)}, we have $n^*(\mu_{M,D})=p^2$.
\end{proof}

Now we are ready to prove Theorem \ref{th(1.1)} by  Theorem \ref{th(1.3)} and  Proposition \ref{th(2.6)}.

\begin{proof}[The proof of Theorem \ref{th(1.1)}]
By Theorem \ref{th(1.3)}, there exist at most $p^2$ mutually
orthogonal exponential functions in $L^2(\mu_{M,D})$. We only need
to prove that the number $p^2$ is the best possible for $p$ is a
prime. Let $\widetilde{M}$ be the matrix satisfies  Proposition
\ref{th(2.6)}. Since $\emptyset\neq\mathcal{Z}_{D}^2\subset
\mathring{E}_p^2$, there exists at least one element $\lambda\in
\mathcal{Z}_{D}^2\subset \mathring{E}_p^2$, and then  Proposition
\ref{th(2.6)} shows that
$\bigcup_{j=1}^{p^2-1}\widetilde{M}^j\lambda =\mathring{E}_p^2$
(mod $\mathbb{Z}^2$). If $\widetilde{M}$ is an expanding matrix,
then $n^*(\mu_{\widetilde{M}^*,D})=p^2$ by Theorem \ref{th(1.3)}.
If not, choosing a sufficient large positive integer $N=pm+1$ such
that $\widehat{M}=N\widetilde{M}$ is expanding. Note that
$pm\widetilde{M}^j\lambda\in\mathbb{Z}^2$ for any $j$. It is easy
to see that $\bigcup_{j=1}^{p^2-1}\widehat{M}^j\lambda
=\bigcup_{j=1}^{p^2-1}\widetilde{M}^j\lambda =\mathring{E}_p^2$
(mod $\mathbb{Z}^2$), and then $n^*(\mu_{\widehat{M}^*,D})=p^2$ by
using Theorem \ref{th(1.3)} again.
\end{proof}

\begin{proof}[The proof of Theorem \ref{th(1.4)}]
For $p=2$ or $3$, the conclusion $n^*(\mu_{M,D})=p^2$ for $M\in \mathfrak{M}_p^{(2)}$  follows from Theorem \ref{th(1.3)}.
Hence to prove Theorem \ref{th(1.4)},
we only need to prove $n^*(\mu_{M,D})=p$ for $M\in \mathfrak{M}_p^{(1)}$.

Note that $\emptyset\neq\mathcal{Z}_{D}^2\subset
\mathring{E}_p^2$, there exists at least one element $\lambda\in
\mathcal{Z}_{D}^2\subset \mathring{E}_p^2$. It is easy to see that
$-\lambda\in\mathcal {Z}(m_{D})$ because
$m_{D}(-\lambda)=\frac{1}{\#(D)}\sum_{d\in D}{e^{2\pi i\langle d,
-\lambda\rangle}}=\frac{1}{\#(D)}\overline{\left(\sum_{d\in
D}{e^{2\pi i\langle d,\lambda\rangle}}\right)}=0$. By
$\mathbb{Z}^2$-periodic of $m_{D}$, we have
$(\lambda+\mathbb{Z}^2)\bigcup(-\lambda+\mathbb{Z}^2)\subset\mathcal
{Z}(m_{D})$. It follows from \eqref{eq(2.3)} that
\begin{align}\label{eq(3.8)}
 \mathcal{Z}(\hat{\mu}_{M,D})\supset \left(\bigcup_{j=1}^\infty{M^*}^j(\lambda+\mathbb{Z}^2)\right)\bigcup
 \left(\bigcup_{j=1}^\infty{M^*}^j(-\lambda+\mathbb{Z}^2)\right).
\end{align}

 (i) If $p=2$, then $n^*(\mu_{M,D})\leq \widetilde{P}=\frac{p^2}{2}=2$ by Theorem \ref{th(1.3)}.
 We now prove $n^*(\mu_{M,D})\geq2$. Let $\Lambda=\{0,s_1\}$  with $s_1=M^*\lambda$  and $\mathcal {E}_\Lambda=\{0, e^{2\pi i\langle s_1,x\rangle}\}$.
It is obvious that  $(\Lambda-\Lambda)\setminus\{0\}\subset \mathcal{Z}(\hat{\mu}_{M,D})$ since $\pm s_1\in\mathcal{Z}(\hat{\mu}_{M,D})$.
By \eqref{eq(2.2)}, $\mathcal {E}_\Lambda$ is an orthogonal exponential function set in $L^2(\mu_{M,D})$, and so $n^*(\mu_{M,D})\geq2$.
This completes the proof
of Theorem \ref{th(1.4)} for $p=2$.

 (ii) If $p=3$, then $n^*(\mu_{M,D})\leq \widetilde{P}=3$ by Theorem \ref{th(1.3)}.
Let $\Lambda=\{0,s_1,s_2\}$  with $s_1=M^*\lambda$ and $s_2=-M^*\lambda$, and let
\begin{align*}
\mathcal {E}_\Lambda=\{0, e^{2\pi i\langle s_1,x\rangle}, e^{2\pi i\langle s_2,x\rangle}\}.
\end{align*}
It is easy to see that $\pm s_1, \pm s_2\in
\mathcal{Z}(\hat{\mu}_{M,D})$. By \eqref{eq(3.8)} and $3\lambda\in\mathbb{Z}^2$, we have $s_1-s_2=M^*(2\lambda)=M^*(-\lambda+3\lambda)\in
M^*(-\lambda+\mathbb{Z}^2)\subset \mathcal{Z}(\hat{\mu}_{M,D})$.
Similarly, $s_2-s_1\in \mathcal{Z}(\hat{\mu}_{M,D})$. These show
that $(\Lambda-\Lambda)\setminus\{0\}\subset
\mathcal{Z}(\hat{\mu}_{M,D})$. By \eqref{eq(2.2)}, $\mathcal
{E}_\Lambda$ is an orthogonal exponential function set in
$L^2(\mu_{M,D})$, hence $n^*(\mu_{M,D})\geq3$.

This completes the proof of  Theorem \ref{th(1.4)}.
\end{proof}

\section{\bf The Sierpinski measures and an example \label{sect.4}}

As an application of the above-mentioned results, in this section
we first complete the discussion of planar Sierpinski measures in
\cite{Li_2009}, then we construct an example in which
$\mathfrak{M}_p^{(1)}=\emptyset$.

As in \cite{Li_2009}, by using the residue system of modulo $3$, the transposed conjugate $M^*$ of $M=\left[ {\begin{array}{*{20}{c}}
{{a}}&{{b}}\\
{{d}}&{{{c}}}
\end{array}} \right]\in M_2(\mathbb{Z})$ can be expressed in the following way:
\begin{equation}\label{eq(4.1)}
M^*=\left[ {\begin{array}{*{20}{c}}
{a}&{d}\\
{b}&{c}
\end{array}} \right]=3\left[ {\begin{array}{*{20}{c}}
{l_1}&{l_2}\\
{l_3}&{l_4}
\end{array}} \right]+M_\alpha=3\widetilde{M}+M_\alpha,
\end{equation}
where $\widetilde{M}\in M_2(\mathbb{Z})$ and the entries of the
matrix $M_\alpha$ are $0, 1$ or $2$. It is easy to see that
$M^*\lambda=M_\alpha\lambda$ (mod $\mathbb{Z}^2$) for $\lambda\in
E_3^2\setminus\{0\}$. This shows that if we want to check the
matrix $M$ belongs to $\mathfrak{M}_3^{(1)}$ or $\mathfrak{M}_3^{(1)}$, we
only need to check  $M_\alpha$.  We can easily prove  that
\begin{align*}
{\rm there\; exist\; only}\;48\;{\rm different}\;M_{\alpha}\;{\rm denoted\; by}\;\{M_{\alpha}\}_{\alpha=1}^{48}.
\end{align*}
Obviously, for each fixed $M_{\alpha}$, there are infinitely many
expanding matrix $M\in M_2(\Bbb Z)$ with $\det(M)\not\in 3\Bbb Z$
such that $M^*-M_{\alpha}\in M(3\Bbb Z)$.  By using the results in
\cite{Li_2009}, and rearranging the indexes of $M_{\alpha}$ if
necessary, we give the concrete expression of matrix $M_{\alpha}$
as follows:
\par
\begin{equation*}
\begin{array}{l}
{M_1} = \left[ {\begin{array}{*{20}{c}}
0&1\\
1&0
\end{array}} \right], \;  \  {M_2} = \left[ {\begin{array}{*{20}{c}}
0&2\\
1&2
\end{array}} \right], \;  \  {M_3} = \left[ {\begin{array}{*{20}{c}}
0&2\\
2&0
\end{array}} \right], \;   \  {M_4} = \left[ {\begin{array}{*{20}{c}}
0&1\\
2&1
\end{array}} \right],\\
{M_5} = \left[ {\begin{array}{*{20}{c}}
1&0\\
0&1
\end{array}} \right], \ \   {M_6} = \left[ {\begin{array}{*{20}{c}}
1&2\\
0&2
\end{array}} \right], \ \  \   {M_7} = \left[ {\begin{array}{*{20}{c}}
1&2\\
1&0
\end{array}} \right], \ \ {M_8} = \left[ {\begin{array}{*{20}{c}}
1&0\\
1&2
\end{array}} \right],\\
{M_9} = \left[ {\begin{array}{*{20}{c}}
2&1\\
0&1
\end{array}} \right],\;  {M_{10}} = \left[ {\begin{array}{*{20}{c}}
2&0\\
0&2
\end{array}} \right], \;  {M_{11}} = \left[ {\begin{array}{*{20}{c}}
2&1\\
2&0
\end{array}} \right], \;  {M_{12}} = \left[ {\begin{array}{*{20}{c}}
2&0\\
2&1
\end{array}} \right]\\
{M_{13}} = \left[ {\begin{array}{*{20}{c}}
0&2\\
1&0
\end{array}} \right], \;    {M_{14}} = \left[ {\begin{array}{*{20}{c}}
0&1\\
2&0
\end{array}} \right], \;    {M_{15}} = \left[ {\begin{array}{*{20}{c}}
1&0\\
0&2
\end{array}} \right], \;     {M_{16}} = \left[ {\begin{array}{*{20}{c}}
1&1\\
0&2
\end{array}} \right],\\
{M_{17}} = \left[ {\begin{array}{*{20}{c}}
1&1\\
1&2
\end{array}} \right], \;  {M_{18}} = \left[ {\begin{array}{*{20}{c}}
1&0\\
2&2
\end{array}} \right], \;   {M_{19}} = \left[ {\begin{array}{*{20}{c}}
1&2\\
2&2
\end{array}} \right], \; {M_{20}} = \left[ {\begin{array}{*{20}{c}}
2&0\\
0&1
\end{array}} \right],\\
{M_{21}} = \left[ {\begin{array}{*{20}{c}}
2&2\\
0&1
\end{array}} \right],\;  {M_{22}} = \left[ {\begin{array}{*{20}{c}}
2&0\\
1&1
\end{array}} \right], \;  {M_{23}} = \left[ {\begin{array}{*{20}{c}}
2&1\\
1&1
\end{array}} \right], \;  {M_{24}} = \left[ {\begin{array}{*{20}{c}}
2&2\\
2&1
\end{array}} \right].
\end{array}
\end{equation*}
\begin{equation*}
\begin{array}{l}
{M_{25}} = \left[ {\begin{array}{*{20}{c}}
0&2\\
1&1
\end{array}} \right], \;    {M_{26}} = \left[ {\begin{array}{*{20}{c}}
0&1\\
2&2
\end{array}} \right], \;    {M_{27}} = \left[ {\begin{array}{*{20}{c}}
1&1\\
0&1
\end{array}} \right], \;     {M_{28}} = \left[ {\begin{array}{*{20}{c}}
1&2\\
0&1
\end{array}} \right],\\
{M_{29}} = \left[ {\begin{array}{*{20}{c}}
1&0\\
1&1
\end{array}} \right], \;  {M_{30}} = \left[ {\begin{array}{*{20}{c}}
1&1\\
2&0
\end{array}} \right], \;   {M_{31}} = \left[ {\begin{array}{*{20}{c}}
1&0\\
2&1
\end{array}} \right], \; {M_{32}} = \left[ {\begin{array}{*{20}{c}}
2&1\\
0&2
\end{array}} \right],\\
{M_{33}} = \left[ {\begin{array}{*{20}{c}}
2&2\\
0&2
\end{array}} \right],\;  {M_{34}} = \left[ {\begin{array}{*{20}{c}}
2&2\\
1&0
\end{array}} \right], \;  {M_{35}} = \left[ {\begin{array}{*{20}{c}}
2&0\\
1&2
\end{array}} \right], \;  {M_{36}} = \left[ {\begin{array}{*{20}{c}}
2&0\\
2&2
\end{array}} \right].
\end{array}
\end{equation*}
\begin{equation*}
\begin{array}{l}
{M_{37}} = \left[ {\begin{array}{*{20}{c}}
0&1\\
1&1
\end{array}} \right], \;    {M_{38}} = \left[ {\begin{array}{*{20}{c}}
0&1\\
1&2
\end{array}} \right], \;    {M_{39}} = \left[ {\begin{array}{*{20}{c}}
0&2\\
2&1
\end{array}} \right], \;     {M_{40}} = \left[ {\begin{array}{*{20}{c}}
0&2\\
2&2
\end{array}} \right],\\
{M_{41}} = \left[ {\begin{array}{*{20}{c}}
1&1\\
1&0
\end{array}} \right], \;  {M_{42}} = \left[ {\begin{array}{*{20}{c}}
1&2\\
1&1
\end{array}} \right], \;   {M_{43}} = \left[ {\begin{array}{*{20}{c}}
1&2\\
2&0
\end{array}} \right], \; {M_{44}} = \left[ {\begin{array}{*{20}{c}}
1&1\\
2&1
\end{array}} \right],\\
{M_{45}} = \left[ {\begin{array}{*{20}{c}}
2&1\\
1&0
\end{array}} \right],\;  {M_{46}} = \left[ {\begin{array}{*{20}{c}}
2&2\\
1&2
\end{array}} \right], \;  {M_{47}} = \left[ {\begin{array}{*{20}{c}}
2&2\\
2&0
\end{array}} \right], \;  {M_{48}} = \left[ {\begin{array}{*{20}{c}}
2&1\\
2&2
\end{array}} \right].
\end{array}
\end{equation*}

For
\begin{align}\label{eq(4.2)}
D=\left\{ {\left( {\begin{array}{*{20}{c}}
0\\
0
\end{array}} \right),\left( {\begin{array}{*{20}{c}}
1\\
0
\end{array}} \right),\left( {\begin{array}{*{20}{c}}
0\\
1
\end{array}} \right)} \right\},
\end{align}
it is easy to show that
$\mathcal{Z}_{D}^2=\{(1/3,2/3)^t,(2/3,1/3)^t \}$. By a direct
calculation or by an application of Propositions $3-6$ in
\cite{Li_2009}, we have $M_\alpha\in\mathfrak{M}_3^{(1)}$ if
$\alpha\in \{1,\cdots,36\}$ and $M_\alpha\in\mathfrak{M}_3^{(2)}$
if $\alpha\in \{37,\cdots,48\}$. Hence the following corollary can
be derived from Theorem \ref{th(1.4)} directly.

\begin{cor} \label{th(4.1)}
Suppose that the self-affine measure $\mu_{M, D}$ is defined by
\eqref{eq(1.1)}, where $D$ is given by \eqref{eq(4.2)}, and $M$ is
an expanding integer matrix with $\det(M)\notin 3\mathbb{Z}$. Let
$n^*(\mu_{M,D})$ be given by \eqref{eq(1.7)} and
$M^*=3\widetilde{M}+M_\alpha$ as \eqref{eq(4.1)}, then
\begin{align*}
n^*(\mu_{M, D})= \left\{
\begin{array}{ll}
3,\quad &{\rm if}\;  \alpha\in \{1,\cdots,36\}, \\
9,\quad &{\rm if}\;   \alpha\in \{37,\cdots,48\} .
\end{array}
\right.
\end{align*}
\end{cor}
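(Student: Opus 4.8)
The plan is that almost all the ingredients are already assembled in the discussion preceding the corollary, so the proof is essentially an application of \textbf{Theorem \ref{th(1.4)}} once one decides, for the given $M$, whether $M\in\mathfrak{M}_3^{(1)}$ or $M\in\mathfrak{M}_3^{(2)}$.

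\emph{Step 1 (hypotheses).} Since $\det(M)\notin 3\mathbb{Z}$ we have $\gcd(\det(M),3)=1$, so $M\in\mathfrak{M}_3$, and $p=3$ is prime. The mask polynomial is $m_D(x)=\tfrac13\bigl(1+e^{2\pi i x_1}+e^{2\pi i x_2}\bigr)$, and $1+z_1+z_2=0$ with $|z_1|=|z_2|=1$ forces $\{z_1,z_2\}$ to be the two primitive cube roots of unity; hence $\mathcal{Z}_D^2=\{(1/3,2/3)^t,(2/3,1/3)^t\}$, which is nonempty and contained in $\mathring{E}_3^2$. So all hypotheses of Theorem \ref{th(1.4)} hold, and it remains only to determine the class of $M$: the conclusion will be $n^*(\mu_{M,D})=3$ if $M\in\mathfrak{M}_3^{(1)}$ and $n^*(\mu_{M,D})=9$ if $M\in\mathfrak{M}_3^{(2)}$.

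\emph{Step 2 (reduction to $M_\alpha$ and a group-theoretic criterion).} Writing $M^*=3\widetilde M+M_\alpha$ as in \eqref{eq(4.1)}, for every $\lambda\in E_3^2$ one has $M^*\lambda\equiv M_\alpha\lambda\pmod{\mathbb{Z}^2}$, whence inductively $(M^*)^j\lambda\equiv M_\alpha^j\lambda\pmod{\mathbb{Z}^2}$ and $\bigcup_{j=1}^{8}(M^*)^j\mathcal{Z}_D^2\equiv\bigcup_{j=1}^{8}M_\alpha^j\mathcal{Z}_D^2\pmod{\mathbb{Z}^2}$. Thus whether $M$ lies in $\mathfrak{M}_3^{(1)}$ or $\mathfrak{M}_3^{(2)}$ depends only on $M_\alpha$; moreover $\det M_\alpha\equiv\det M\not\equiv 0\pmod 3$, so $M_\alpha\in GL(2,\mathbb{F}_3)$, and since $|GL(2,\mathbb{F}_3)|=48$ every element of the group occurs exactly once. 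Identifying $3\,\mathcal{Z}_D^2$ with the punctured line $\ell_0=\{(1,2)^t,(2,1)^t\}$ spanned by $(1,2)^t$ in $\mathbb{F}_3^2$, each set $M_\alpha^j(3\,\mathcal{Z}_D^2)=M_\alpha^j\ell_0$ is again a punctured line, so $M\in\mathfrak{M}_3^{(2)}$ precisely when the $\langle M_\alpha\rangle$-orbit of $\ell_0$ exhausts the four lines of $\mathbb{P}^1(\mathbb{F}_3)$ (which account for all eight nonzero vectors). Under the surjection $GL(2,\mathbb{F}_3)\to\mathrm{Sym}(\mathbb{P}^1(\mathbb{F}_3))\cong S_4$ with kernel the center $\{\pm I\}$, this says that the image of $M_\alpha$ is a $4$-cycle; equivalently, $M_\alpha$ has order $8$ in $GL(2,\mathbb{F}_3)$, i.e.\ (using Definitions \ref{de(2.4)}--\ref{de(2.5)} and the companion-matrix remarks) its characteristic polynomial mod $3$ is one of the two primitive quadratics over $\mathbb{F}_3$. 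There are exactly $12$ such matrices (two primitive quadratics, each realized by a conjugacy class of $6$ matrices, $6\cdot 2=12$), hence $12$ matrices in $\mathfrak{M}_3^{(2)}$ and $48-12=36$ in $\mathfrak{M}_3^{(1)}$.

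\emph{Step 3 (bookkeeping and conclusion).} It then remains to identify these $12$ matrices on the explicit list as $M_{37},\dots,M_{48}$, so that $M_1,\dots,M_{36}$ all lie in $\mathfrak{M}_3^{(1)}$. This is exactly the content of Propositions $3$--$6$ of \cite{Li_2009}, and it can also be confirmed by a direct orbit computation modulo $3$. Combining this with Step 1 and Theorem \ref{th(1.4)} gives $n^*(\mu_{M,D})=3$ for $\alpha\in\{1,\dots,36\}$ and $n^*(\mu_{M,D})=9$ for $\alpha\in\{37,\dots,48\}$, as claimed. The only genuine work is this last enumeration of $48$ cases; it presents no conceptual difficulty once the question has been recast, as in Step 2, as a question about $4$-cycles — equivalently, order-$8$ elements, equivalently primitive characteristic polynomials — in $GL(2,\mathbb{F}_3)$, so the ``main obstacle'' is purely one of tedium.
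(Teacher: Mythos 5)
Your proposal is correct, and its skeleton is the same as the paper's: establish $\mathcal{Z}_D^2=\{(1/3,2/3)^t,(2/3,1/3)^t\}\subset\mathring{E}_3^2$, observe that membership in $\mathfrak{M}_3^{(1)}$ or $\mathfrak{M}_3^{(2)}$ depends only on the residue $M_\alpha$ of $M^*$ modulo $3$, sort the $48$ possible $M_\alpha$ into the two classes, and apply Theorem \ref{th(1.4)}. Where you genuinely diverge is in how the sorting is done. The paper disposes of it with ``a direct calculation or an application of Propositions $3$--$6$ in \cite{Li_2009}'', i.e.\ a brute-force orbit check over all $48$ matrices. You instead identify $3\mathcal{Z}_D^2$ with a punctured line of $\mathbb{F}_3^2$ and recast the condition $\bigcup_{j=1}^{8}M_\alpha^j\mathcal{Z}_D^2=\mathring{E}_3^2$ as transitivity of $\langle M_\alpha\rangle$ on $\mathbb{P}^1(\mathbb{F}_3)$, hence as the image of $M_\alpha$ in $PGL(2,\mathbb{F}_3)\cong S_4$ being a $4$-cycle, hence as $M_\alpha$ having order $8$, hence as its characteristic polynomial mod $3$ being one of the two primitive quadratics. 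This is a real gain: it explains \emph{why} the split is $36$ versus $12$ (the $12$ order-$8$ elements of $GL(2,\mathbb{F}_3)$), shows the answer is independent of which punctured line $\mathcal{Z}_D^2$ happens to be, reduces each of the $48$ verifications to computing one characteristic polynomial (or checking $M_\alpha^4\equiv -I$), and ties the corollary back to the primitive-polynomial machinery already set up for Proposition \ref{th(2.6)}. The only caveat is that, like the paper, you still must match the twelve order-$8$ matrices against the explicit list to confirm they are exactly $M_{37},\dots,M_{48}$; you defer this to the same citation or a direct check, which is fine, but it means the residual bookkeeping has been reorganized rather than eliminated.
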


\medskip

\begin{re}
In \cite{Li_2009}, the  proof contains a gap and the conclusion is
not correct for $\mathfrak{M}_3^{(2)}$. In fact, the third author
of this paper assumes that there exist $4$  exponential functions
$\{ e^{2\pi i\langle\lambda_j,x\rangle}:j=1,2,3,4\}$ being
mutually orthogonal in  $L^2(\mu_{M, D})$. Then the differences
$\lambda_i-\lambda_j\in \mathcal{Z}(\widehat{\mu}_{M,
D})=\bigcup_{j=1}^4(Z_j\bigcup\widetilde{Z}_j)$ for $i\not=j$(see
Proposition 2.6 in \cite{Li_2009}), and thus the following six
differences
$$
\lambda_1-\lambda_2, \  \  \lambda_1-\lambda_3, \  \  \lambda_1-\lambda_4, \ \  \lambda_2-\lambda_3, \ \  \lambda_2-\lambda_4, \ \  \lambda_3-\lambda_4
$$
belong to $\mathcal{Z}(\widehat{\mu}_{M, D})$. One can regard the
above eight sets $Z_1, Z_2, Z_3, Z_4, \widetilde{Z}_1,
\widetilde{Z}_2, \widetilde{Z}_3, \widetilde{Z}_4$ as eight small
boxes and let the above six differences belong to these eight
small boxes. It has finite many possible distributions, and the
following distribution is not considered in \cite{Li_2009}:
$$
\begin{array}{|c|c|c|c|c|c|c|c|}
\hline
Z_1&Z_2&Z_3&Z_4&\tilde{Z_1}&\tilde{Z_2}&\tilde{Z_3}&\tilde{Z_4}\\
\hline
 \lambda_1-\lambda_2& \lambda_1-\lambda_3 &\lambda_1-\lambda_4 &\lambda_3-\lambda_2 & \lambda_2-\lambda_1
 & \lambda_3-\lambda_1 & \lambda_4-\lambda_1 & \lambda_2-\lambda_3  \\
 \lambda_3-\lambda_4& \lambda_2-\lambda_4 & & & \lambda_4-\lambda_3 & \lambda_4-\lambda_2  &  & \\
\hline
\end{array}
$$
For the above distribution, we can not get a contradiction by
Propositions $2$ and $6$ in \cite{Li_2009}. However, the above
Corollary 4.1 gives a complete result.
\end{re}

In the end of this paper, we construct an example to illustrate
the case that $\mathfrak{M}_p^{(1)}=\emptyset$.

\begin{ex}\label{th(4.2)}
Let
\begin{align*}
D_1=\left\{ {\left( {\begin{array}{*{20}{c}}
0\\
0
\end{array}} \right),\left( {\begin{array}{*{20}{c}}
-1\\
0
\end{array}} \right),\left( {\begin{array}{*{20}{c}}
1\\
1
\end{array}} \right)} \right\}, \ \
D_2=\left\{ {\left( {\begin{array}{*{20}{c}}
0\\
0
\end{array}} \right),\left( {\begin{array}{*{20}{c}}
3\\
1
\end{array}} \right),\left( {\begin{array}{*{20}{c}}
0\\
-1
\end{array}} \right)} \right\}
\end{align*}
and $D_3=D_1 + D_2$. Then $n^*(\mu_{M,D_{3}})=9$ for any expanding
integer matrix $M$ with  $\det(M)\notin 3\mathbb{Z}$, i,e.,
$\mathfrak{M}_3^{(1)}=\emptyset$.
\end{ex}
\begin{proof}
Since $D_3=D_1 + D_2$, we have $m_{D_3}(x)=m_{D_1}(x)m_{D_2}(x)$ and  $\mathcal {Z}(m_{D_3}(x))=\mathcal {Z}(m_{D_1}(x))
\cup \mathcal {Z}(m_{D_2}(x))$.
Let $x=(x_1,x_2)^t$, then $m_{D_2}(x)=1+e^{2\pi (3x_1+x_2)}+e^{2\pi(-x_2)}$. It is well known that $m_{D_2}(x)=0$ only if
\begin{align*}
\left\{
\begin{array}{lll}
3x_1+x_2&=&\frac{1}{3}+k_1,\\
-x_2&=&\frac{2}{3}+k_2,\\
\end{array}\right.    \ \ \ {\rm{or}} \ \ \ \
\left\{
\begin{array}{lll}
3x_1+x_2&=&\frac{2}{3}+k_1,\\
-x_2&=&\frac{1}{3}+k_2,\\
\end{array}\right.
\end{align*}
where $k_1$ and $k_2$ are integers.
It is easy to show that
$$
\mathcal{Z}_{D_2}^2=\left\{ {\left( {\begin{array}{*{20}{c}}
0\\
1/3
\end{array}} \right),\left( {\begin{array}{*{20}{c}}
1/3\\
1/3
\end{array}} \right),\left( {\begin{array}{*{20}{c}}
2/3\\
1/3
\end{array}} \right),\left( {\begin{array}{*{20}{c}}
0\\
2/3
\end{array}} \right),\left( {\begin{array}{*{20}{c}}
1/3\\
2/3
\end{array}} \right),\left( {\begin{array}{*{20}{c}}
2/3\\
2/3
\end{array}} \right)} \right\}.
$$
Similarly,
$$
\mathcal{Z}_{D_1}^2=\left\{ {\left( {\begin{array}{*{20}{c}}
1/3\\
0
\end{array}} \right),\left( {\begin{array}{*{20}{c}}
2/3\\
0
\end{array}} \right)} \right\}.
$$
Hence $\mathcal{Z}_{D_3}^2=\mathcal{Z}_{D_1}^2\bigcup\mathcal{Z}_{D_2}^2=\mathring{E}_3^2.$

By Proposition \ref{th(2.2)}, we have $M^*(\mathcal{Z}_{D_3}^2)=M^*(\mathring{E}_3^2)=\mathring{E}_3^2$(mod $\mathbb{Z}^2$)
for any expanding matrix $M$ with $\gcd(\det(M),3)=1$, and then
$\bigcup_{j=1}^{8}{M^*}^j\mathcal{Z}_{D_3}^2=\mathring{E}_3^2$ (mod $ \mathbb{Z}^2$). Hence $M\in\mathfrak{M}_3^{(2)}$ and $n^*(\mu_{M,D_3})=9$.
\end{proof}

\end{document}